\newcommand{\llb}{\llbracket}
\newcommand{\rrb}{\rrbracket}
\newcommand{\la}{\langle}
\newcommand{\ra}{\rangle}
\newcommand{\cat}{{^\smallfrown}}
\newcommand{\res}{{\upharpoonright}}
\newcommand{\NN}{\mathbb{N}}
\newcommand{\NNNN}{\NN^\NN}
\newcommand{\oi}{\{0,1\}}
\newcommand{\oist}{\oi^\ast}
\newcommand{\oiNN}{\oi^\NN}
\newcommand{\Pioi}{\Pi^0_1}
\newcommand{\leT}{\leq_{\mathrm{T}}}
\newcommand{\nleT}{\nleq_{\mathrm{T}}}
\newcommand{\lT}{<_{\mathrm{T}}}
\newcommand{\geT}{\geq_{\mathrm{T}}}
\newcommand{\gT}{>_{\mathrm{T}}}
\newcommand{\eqT}{\equiv_{\mathrm{T}}}
\newcommand{\degT}{\mathrm{deg}_{\mathrm{T}}}
\newcommand{\lett}{\leq_{\mathrm{tt}}}
\newcommand{\eqtt}{\equiv_{\mathrm{tt}}}
\newcommand{\degtt}{\mathrm{deg}_{\mathrm{tt}}}
\newcommand{\Ew}{\mathcal{E}_{\mathrm{w}}}
\newcommand{\CPA}{\mathrm{CPA}}
\newcommand{\PA}{\mathsf{PA}}
\newcommand{\MLR}{\mathrm{MLR}}
\newcommand{\LR}{\mathrm{LR}}
\newcommand{\liff}{\Leftrightarrow}
\newcommand{\Sent}{\textrm{Sent}}
\newcommand{\Comp}{\textrm{Comp}}
\newcommand{\aaa}{\mathbf{a}}
\newcommand{\bbb}{\mathbf{b}}
\theoremstyle{definition}
\newtheorem{thm}{Theorem}[section]
\newtheorem{lem}[thm]{Lemma}
\newtheorem{dfn}[thm]{Definition}
\newtheorem{rem}[thm]{Remark}
\title{Pseudojump inversion in special r.\ b.\ $\Pi^0_1$ classes}
\author{Hayden R. Jananthan\\
  Department of Mathematics\\
  Vanderbilt University\\
  Nashville, TN 37203, USA\\
  \href{https://my.vanderbilt.edu/haydenjananthan}
  {https://my.vanderbilt.edu/haydenjananthan}\\
  \href{mailto:hayden.r.jananthan@vanderbilt.edu}
  {hayden.r.jananthan@vanderbilt.edu}\\
  {\ }\\
  Stephen G. Simpson\\
  Department of Mathematics\\
  Vanderbilt University\\
  Nashville, TN 37203, USA\\
  \href{http://www.math.psu.edu/simpson}
  {http://www.math.psu.edu/simpson}\\
  \href{mailto:sgslogic@gmail.com}{sgslogic@gmail.com}}
\date{First draft: June 28, 2019\\
  This draft: \today}
\begin{document}

\maketitle

\begin{abstract}
  The Jump Inversion Theorem says that for every real $A\geT0'$ there
  is a real $B$ such that $A\eqT B'\eqT B\oplus0'$.  A known
  refinement of this theorem says that we can choose $B$ to be a
  member of any special $\Pi^0_1$ subclass of $\oiNN$.  We now
  consider the possibility of analogous refinements of two other
  well-known theorems: the Join Theorem -- for all reals $A$ and $Z$
  such that $A\geT Z\oplus0'$ and $Z\gT0$, there is a real $B$ such
  that $A\eqT B'\eqT B\oplus0'\eqT B\oplus Z$ -- and the Pseudojump
  Inversion Theorem -- for all reals $A\geT0'$ and every $e\in\NN$,
  there is a real $B$ such that $A\eqT B\oplus W_e^B\eqT
  B\oplus0'$. We show that in these theorems, $B$ can be found in some
  special $\Pi^0_1$ subclasses of $\oiNN$ but not in others.
\end{abstract}

\newpage

\tableofcontents

\section{Introduction}

We begin with a well-known theorem.

\begin{thm}[Jump Inversion Theorem, due to Friedberg, see
  \protect{\cite[Theorem 13.3.IX]{rogers}}]\label{thm:jump}
  Given a real $A\geT0'$ we can find a real $B$ such that $A\eqT B'
  \eqT B \oplus 0'$.
\end{thm}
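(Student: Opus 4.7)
The plan is to follow Friedberg's classical finite-extension construction, producing $B \in \oiNN$ as the union of a $0'$-recursive increasing sequence $\sigma_0 \subset \sigma_1 \subset \cdots$ of finite binary strings in which two jobs are interleaved at each stage $e$: a forcing step that decides whether $e \in B'$, and a coding step that embeds the bit $A(e)$ into $B$. Because $A \geT 0'$, the entire construction will be $A$-recursive, giving $B \leT A$; the coding step will yield $A \leT B \oplus 0'$; and the forcing step will yield $B' \leT B \oplus 0'$. Together with the trivial $B \oplus 0' \leT B'$, these reductions give $A \eqT B' \eqT B \oplus 0'$.

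Explicitly, take $\sigma_0 = \la \ra$. Given $\sigma_e$, use $0'$ to decide the $\Sioi$ question ``does there exist $\tau \supseteq \sigma_e$ with $\Phi_e^\tau(e) \downarrow$?''. If yes, let $\sigma_{e+1}'$ be the shortest-then-lexicographically-least such $\tau$, located by a $0'$-recursive search; if no, let $\sigma_{e+1}' = \sigma_e$. Then set $\sigma_{e+1} = \sigma_{e+1}' \cat \la A(e) \ra$, and finally $B = \bigcup_e \sigma_e$. For the verification, $B \leT A$ is immediate since the construction consults only $0'$ and individual bits of $A$. Given $B \oplus 0'$, one mimics the construction stage by stage, recovering $\sigma_{e+1}'$ from $\sigma_e$ using $0'$ alone and then reading $A(e) = B(|\sigma_{e+1}'|)$ directly from $B$; this simultaneously reconstructs $A$ (so $A \leT B \oplus 0'$) and the answer to each halting question, whence $B'$ (so $B' \leT B \oplus 0'$).

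I anticipate the most delicate point to be the interaction between the two jobs at a single stage: one must ensure that a ``no'' answer at stage $e$ really produces $\Phi_e^B(e) \uparrow$, and that the coding bit $A(e)$ cannot sabotage a forced convergence in the ``yes'' branch. Both points reduce to simple monotonicity once the operations are ordered correctly: in the ``yes'' branch, $\sigma_{e+1}'$ already witnesses convergence, so the use is bounded by $|\sigma_{e+1}'|$ and the subsequent bits of $B$, including the coding bit $A(e)$, leave the computation unchanged; in the ``no'' branch, every finite initial segment of $B$ extends $\sigma_e$ and so fails to force convergence, whence $\Phi_e^B(e) \uparrow$ by compactness of use.
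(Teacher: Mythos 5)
Your proposal is correct and is exactly the classical finite-extension argument: the paper itself only cites this theorem (Friedberg, via Rogers) without proving it, but its \S2 proofs of the Join and Pseudojump Inversion Theorems use the same interleaved forcing-and-coding scheme, of which your construction is precisely the special case for the jump operator, with the same verification that the stage sequence is recoverable from $B\oplus0'$. The only cosmetic difference is that the paper defines the jump via $\varphi_e^{(1),B}(0){\downarrow}$ rather than the diagonal $\varphi_e^{(1),B}(e){\downarrow}$, which changes nothing in the argument.
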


Here $0'$ denotes the Halting Problem, $B'$ denotes the Turing jump of
$B$, and $\eqT$ denotes Turing equivalence.  There are two
important related theorems, which read as follows.

\begin{thm}[Join Theorem, due to Posner and Robinson
  \protect{\cite[Theorem 1]{po-ro}}, see also
  \protect{\cite[Theorem 2.1]{jo-sh-1}}]\label{thm:join}
  Given reals $A$ and $Z$ such that $A\geT Z\oplus0'$ and $Z\gT0$, we
  can find a real $B$ such that $A\eqT B'\eqT B\oplus 0'\eqT B\oplus
  Z$.
\end{thm}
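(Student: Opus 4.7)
The plan is to extend the construction behind Theorem~\ref{thm:jump} so that $A$ is recoverable from $B\oplus Z$ as well as from $B\oplus 0'$. Since $A\geT Z\oplus 0'$, both $0'$ and $Z$ are available as oracles inside an $A$-computable construction; building $B$ recursively in $A$ automatically yields $B\leT A$, hence $B\oplus 0'\leT A$ and $B\oplus Z\leT A$.

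I would construct $B=\bigcup_s\sigma_s$ by finite extensions with $\sigma_0=\emptyset$. At stage $s$, pass from $\sigma_{s-1}$ to $\sigma_s$ in two substeps. For \emph{jump control}, use $0'$ to decide whether some $\tau\supseteq\sigma_{s-1}$ satisfies $\Phi_s^\tau(s)\downarrow$; if so, let $\tau_s$ be the shortest such, otherwise set $\tau_s=\sigma_{s-1}$. Exactly as in Theorem~\ref{thm:jump}, this forces $B'\leT B\oplus 0'$ once the sequence $(\sigma_s)$ is recoverable from $B\oplus 0'$. For \emph{coding}, append to $\tau_s$ a finite block that encodes $A(s)$ in such a way that its location and content can be read off from $B\oplus Z$ alone.

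Granted such a coding, we obtain $A\leT B\oplus 0'$ (reconstruct the stages using $0'$ and then read the coded bits) and the crucial new reduction $A\leT B\oplus Z$. Combined with $B\oplus 0'\leT A$, $B\oplus Z\leT A$, and the standing $0'\leT B'$, these inequalities collapse to $A\eqT B'\eqT B\oplus 0'\eqT B\oplus Z$.

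The main obstacle is the coding substep: the bits of $A$ must be locatable in $B$ from $Z$ alone, without using $0'$ to pinpoint the jump extensions $\tau_s$, which may be arbitrarily long. This is the essential content of the Posner-Robinson theorem, and its proof exploits only the bare noncomputability of $Z$ (no hyperimmunity or $Z\leT 0'$ is assumed). The standard device is to append blocks whose shape depends on bits of $Z$, so that $Z$ itself functions as a local synchronization signal: once $B\oplus Z$ has located the previous stages it can inductively determine the next stage's boundary and hence read off $A(s)$.
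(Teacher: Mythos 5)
Your outline has the right global shape (finite-extension construction below $A$, a jump-control substep, a coding substep, and the bookkeeping of reductions, which does collapse correctly once $A\leT B\oplus Z$ is secured), but the one step you defer to ``the standard device'' is exactly the mathematical content of the theorem, and the device you gesture at is not the right one. The difficulty is not merely to make the coded bits of $A$ locatable from $B\oplus Z$; it is that $B\oplus Z$ must also determine whether and where the $0'$-guided jump-forcing extension was inserted, since its length is decided by a $\Sigma^0_1$ question that $Z$ cannot answer directly, and in addition $B\oplus Z$ must end up computing $B'$ (you need $B'\leT B\oplus Z$, since $B'\eqT A\eqT B\oplus Z$ is required). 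Simply ``appending blocks whose shape depends on bits of $Z$'' after an already-performed $0'$-extension does not achieve either: $B\oplus Z$ cannot tell where the $0'$-extension ends before looking for your marker block, and nothing in your scheme ties the outcome of the jump question to information $Z$ possesses.

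The actual mechanism (as in the paper, following Posner--Robinson) merges the two substeps rather than performing them in your order. One first arranges, without loss of generality, that $\NN\setminus Z$ is not $\Sigma^0_1$ (possible since $Z$ is noncomputable and $Z\eqT\NN\setminus Z$). At stage $2n+1$ one forms the $\Sigma^0_1$ set $S_n$ of all $i$ such that some extension of $\sigma_{2n}\cat\la\underbrace{0,\ldots,0}_i,1\ra$ forces $\varphi_n^{(1),-}(0)$ to converge, and uses non-$\Sigma^0_1$-ness of $\NN\setminus Z$ to pick an \emph{agreement point} $i$ with $i\in Z\liff i\in S_n$; one appends $\la 0,\ldots,0,1\ra$ (with $i$ zeros) and then performs the forcing extension exactly when $i\in Z$. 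This single marker does all the work: $B$ recovers $i$ as the unique such block following $\sigma_{2n}$; $Z$ then tells whether the forcing extension is present (and, knowing it exists, $B\oplus Z$ can find it by unbounded search), while $0'$ tells the same via $i\in S_n$; and $n\in B'\liff i\in S_n\liff i\in Z$, which is what yields $B'\leT B\oplus Z$ alongside $A\leT B\oplus Z$. Without this agreement-point idea (or an equivalent), your two-substep plan as described cannot be completed, so the proposal has a genuine gap at its central point.
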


\begin{thm}[Pseudojump Inversion Theorem, due to Jockusch and Shore
  \protect{\cite[Theorem 2.1]{jo-sh-1}}] \label{thm:pjinv} Given a
  real $A\geT0'$ and an integer $e\in\NN$, we can find a real $B$ such
  that $A\eqT J_e(B)\eqT B\oplus 0'$.
\end{thm}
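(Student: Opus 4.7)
The plan is to construct $B$ by an $A$-computable finite-extension argument, producing strings $\sigma_0 \preceq \sigma_1 \preceq \cdots$ in $\oist$ and setting $B = \bigcup_s \sigma_s$. Two of the required reductions come for free: $J_e(B) \leT B \oplus 0'$ holds automatically, and $B \oplus 0' \leT A$ follows from $B \leT A$ together with the hypothesis $A \geT 0'$. Hence it suffices to secure the single coding $A \leT J_e(B) = B \oplus W_e^B$, for then the chain $B \oplus 0' \leT A \leT J_e(B) \leT B \oplus 0'$ collapses to give all three desired equivalences $A \eqT J_e(B) \eqT B \oplus 0'$.

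To code $A$ into $J_e(B)$, I would arrange $n \in A \liff c_n \in W_e^B$ for a $B$-computable sequence of coding locations $\la c_n : n \in \NN \ra$, so that $A$ is uniformly recoverable from $J_e(B)$. At stage $n$, given $\sigma_n$, I would use $0'$ (available from $A$) to ask whether some $\tau \succeq \sigma_n$ satisfies $c_n \in W_e^\tau$. The two easy cases are: \emph{yes and $n \in A$}, in which case set $\sigma_{n+1}$ to the least such $\tau$; and \emph{no and $n \notin A$}, in which case set $\sigma_{n+1} = \sigma_n \cat 0$, after which no extension can enumerate $c_n$.

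The main obstacle is the \emph{yes and $n \notin A$} case: I must extend $\sigma_n$ so as to permanently forbid $c_n$ from entering $W_e^B$. The sealing property of $\sigma_{n+1}$ is $\Pi^0_1$, and the existence of a sealing extension is generally $\Sigma^0_2$, which lies beyond direct $0'$-computation. The standard Jockusch--Shore remedy is to choose $c_n$ dynamically rather than from a fixed list: at each stage, $\Sigma^0_1$-search simultaneously for a fresh witness $c$ and for a splitting of $\sigma_n$ into incompatible extensions $\rho^+, \rho^-$ such that $c \in W_e^{\rho^+}$ while $\rho^-$ carries a manifest $\Pi^0_1$ guarantee of sealing (via restraints that the construction itself maintains on future stages, indexed against the finitely many short axioms that could still threaten $c$). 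Extending by $\rho^+$ codes $c$ in, extending by $\rho^-$ codes it out, and $A(n)$ is decoded from this choice together with $W_e^B$. This matching-pair search is $\Sigma^0_1$, hence feasible from $0'$. Once this coding mechanism is in place, iterating through stages and verifying $A \leT J_e(B)$ is routine.
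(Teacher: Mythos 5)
There are two genuine gaps here. First, your claim that $J_e(B)\leT B\oplus0'$ ``holds automatically'' is false: $W_e^B$ is merely r.e.\ in $B$, so in general it is only computable from $B'$, and $B'\leT B\oplus0'$ fails whenever $B$ is not generalized low (indeed, for suitable $e$ the operator $J_e$ is essentially the jump itself). So this reduction is not free; it has to be \emph{built into} the construction, which means the construction must decide \emph{every} question ``$n\in W_e^B$?'' in a way that is recoverable from $B\oplus0'$ -- not just the membership of a sparse set of coding locations, which is all your scheme controls.

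Second, the coding scheme ``$n\in A\liff c_n\in W_e^B$'' is not achievable for an arbitrary $e$, and the ``matched pair'' remedy you sketch does not repair it. If $W_e^X=\NN$ for every oracle $X$ (say $\varphi_e^X(n)$ halts without querying $X$), no witness can ever be kept out, so no sealing extension $\rho^-$ exists for any fresh $c$; dually, if $W_e^X=\emptyset$ for all $X$, no witness can be put in. In these degenerate cases your $\Sigma^0_1$ search for a pair $(c,\rho^+,\rho^-)$ simply never succeeds, the construction stalls, and yet the theorem still has content there (it reduces to Friedberg jump inversion). The actual Jockusch--Shore argument, which is the one the paper gives, never tries to control $W_e^B$ according to $A$. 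At stage $2n+1$ it asks the $\Sigma^0_1$ question (answerable from $0'\leT A$) whether \emph{some} extension $\sigma\supseteq\sigma_{2n}$ has $n\in W_{e,|\sigma|}^{\sigma}$; if yes it takes the least such $\sigma$, forcing $n\in W_e^B$, and if no then the negative answer is self-sealing, since then no extension of $\sigma_{2n}$ -- in particular no initial segment of $B$ -- can ever put $n$ in. Thus every membership fact about $W_e^B$ is settled by the construction, which is what yields $J_e(B)\leT B\oplus0'$. The bit $A(n)$ is then coded directly into $B$ at stage $2n+2$ as $\sigma_{2n+2}=\sigma_{2n+1}\cat\la A(n)\ra$, and it is decoded from $J_e(B)=B\oplus W_e^B$ by reconstructing the sequence $\la\sigma_j\ra$: the $W_e^B$-half answers exactly the questions the construction asked, and the $B$-half supplies the coded bits. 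Your proposal is missing precisely this idea -- that a negative answer to the $\Sigma^0_1$ forcing question needs no sealing and that $A$ should be coded into $B$ itself rather than into $W_e^B$ -- and without it the problematic case you identify cannot be handled.
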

Here $J_e$ denotes the $e$th pseudojump operator, defined by
$J_e(B)=B\oplus W_e^B$ where $\la W_e^B\mid e\in\NN\ra$
is a fixed standard recursive enumeration of the $B$-recursively
enumerable subsets of $\NN$.

Our results in this paper concern special $\Pi^0_1$ classes.
Following \cite{js2} we define a \emph{special $\Pi^0_1$ class} to be
a nonempty $\Pi^0_1$ subset of $\NNNN$ with no recursive elements.
Here $\NNNN$ is the Baire space, but as in \cite{js2,js} we focus on
special $\Pi^0_1$ subclasses of $\oiNN$, the Cantor space.  On the
other hand, because of \cite[Theorem 4.10]{massrand} our results
concerning special $\Pi^0_1$ subclasses of $\oiNN$ will also apply to
\emph{recursively bounded} special $\Pi^0_1$ subclasses of $\NNNN$.
Following \cite{js} we use ``r.\ b.''\ as an abbreviation for
``recursively bounded.''

We now recall another known theorem, which says that we can find the
$B$ for Theorem \ref{thm:jump} in any special $\Pi^0_1$ subclass of
the Cantor space.
\begin{thm}[due to Jockusch and Soare \protect{\cite[just
    after the proof of Theorem 2.1]{js}}]\label{thm:jump-special}
  Let $P$ be a special $\Pi^0_1$ subclass of $\oiNN$.  Given a real
  $A\geT0'$, we can find a real $B\in P$ such that $A\eqT B'\eqT
  B\oplus 0'$.
\end{thm}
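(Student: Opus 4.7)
The plan is to build $B$ as the union of an $A$-recursive nested sequence of finite binary strings $\sigma_0\subseteq\sigma_1\subseteq\cdots$, tracked by an $A$-recursive sequence of indices for nonempty $\Pi^0_1$ classes $P=P_0\supseteq P_1\supseteq\cdots$, with $P_s\subseteq[\sigma_s]$ for every $s$.  Stages alternate between \emph{jump stages}, which fix $B'(e)$ using $A$, and \emph{coding stages}, which write the bit $A(e)$ into $B$ at a location recoverable from $B\oplus0'$.

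At jump stage $s=2e$ I leave $\sigma_{s+1}=\sigma_s$ and consider $Q=\{X\in P_s\mid\Phi_e^X(e)\uparrow\}$, a $\Pi^0_1$ subclass of $P_s$.  Since $A\geT0'$, I $A$-recursively decide whether $Q\ne\emptyset$.  If so, I set $P_{s+1}:=Q$ and record $e\notin B'$; otherwise $P_{s+1}:=P_s$, in which case every member of $P_s$ — and in particular $B$ — satisfies $\Phi_e^B(e)\downarrow$, giving $e\in B'$.  Reading the answers along the construction yields $B'\leT A$.

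At coding stage $s=2e+1$ I use $0'$ to locate the lexicographically shortest $\tau\supseteq\sigma_s$ which is a \emph{splitting node} of $P_s$, i.e.\ both $P_s\cap[\tau\cat0]$ and $P_s\cap[\tau\cat1]$ are nonempty, and set $\sigma_{s+1}:=\tau\cat A(e)$, $P_{s+1}:=P_s\cap[\sigma_{s+1}]$.  This is the only place where $A$ is used beyond $0'$.  To recover $A$ from $B\oplus0'$, re-run the construction using $0'$ alone: jump stages use only $0'$, and at each coding stage $\tau$ can be found using $0'$, whereupon $A(e)=B(|\tau|)$.  This gives $A\leT B\oplus0'\leT B'$, which combined with $B'\leT A$ yields $A\eqT B'\eqT B\oplus0'$.

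The main obstacle is verifying that splitting nodes exist at every coding stage.  The key lemma, which I would prove first, is that every $\Pi^0_1$ singleton in $\oiNN$ is recursive: if $\{C\}=[T]$ for a recursive tree $T$, then each node $\sigma$ not extending $C$ has only finitely many extensions in $T$ by K\"onig's lemma, so $C\res n$ is computed by enumerating $T$ until exactly one node of length $n$ has extensions reaching the current stage.  Applied to our situation, each $P_s$ is a nonempty $\Pi^0_1$ subclass of the special class $P$, so has no recursive member, hence no isolated point; neither does any nonempty $P_s\cap[\tau]$, as this is itself a $\Pi^0_1$ subclass of $P$.  Therefore every node compatible with $P_s$ admits a splitting extension in $P_s$, and the search for the shortest one is visibly $0'$-computable since extendibility in a $\Pi^0_1$ class is $\Pi^0_1$.
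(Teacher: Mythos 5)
The paper does not include a proof of Theorem~\ref{thm:jump-special}; it is cited to Jockusch and Soare, so there is no proof in the paper to compare against. Judged on its own merits, your argument is correct and is, as far as I can tell, essentially the intended proof.

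You interleave Low Basis Theorem stages with coding stages, tracking the construction by a descending chain of nonempty $\Pi^0_1$ classes $P_0\supseteq P_1\supseteq\cdots$ with $P_s\subseteq\llb\sigma_s\rrb$. The jump stages give $B'\leT A$ (since membership of $B$ in every $P_s$ -- guaranteed by compactness, since $\bigcap_s P_s\subseteq\bigcap_s\llb\sigma_s\rrb=\{B\}$ and the intersection is nonempty -- decides $\Phi_e^B(e)$), and the coding stages make $A\leT B\oplus0'$ because every stage other than the choice of the coded bit is $0'$-computable. The crucial point, that splitting nodes exist at every coding stage, you correctly reduce to the lemma that $\Pi^0_1$ singletons in $\oiNN$ are recursive: a failure of splitting above $\sigma_s$ would make $P_s\cap\llb\sigma_s\rrb$ a $\Pi^0_1$ singleton, hence recursive, contradicting that $P$ is special. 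This is where specialness is essential, and you isolate it cleanly. The structure parallels what the paper does in its proof of Theorem~\ref{thm:pjinv-cpa}, with your singleton lemma playing the role that the G\"odel--Rosser Splitting Lemma~\ref{lem:splitting} plays there for $\CPA$ specifically; your version is the general-purpose replacement, which is exactly what the general theorem needs.

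Two cosmetic remarks. In the singleton lemma you write ``each node $\sigma$ not extending $C$,'' but since $C$ is infinite no finite string extends it; you mean each $\sigma$ that is not an initial segment of $C$ (equivalently, incompatible with $C$). And ``lexicographically shortest'' presumably means length-lexicographically least; any canonical $0'$-computable choice works. Neither affects correctness.
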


This refinement of Theorem \ref{thm:jump} suggests a question as to
the existence or nonexistence of analogous refinements of Theorems
\ref{thm:join} and \ref{thm:pjinv}.  In order to state our results
concerning this question, we make the following definition.
\begin{dfn}\label{dfn:props}
  Let $P\subseteq\NNNN$ be a subclass of the Baire space.
  \begin{enumerate}
  \item $P$ has the \emph{Join Property} if for all reals $A$ and $Z$
    such that $A\geT Z\oplus0'$ and $Z\gT0$, there exists
    $B\in P$ such that $A\eqT B'\eqT B\oplus0'\eqT
    B\oplus Z$.
  \item $P$ has the \emph{Pseudojump Inversion Property} if for all
    reals $A\geT0'$ and all $e\in\NN$, there exists $B\in
    P$ such that $A\eqT J_e(B)\eqT B\oplus0'$.
  \end{enumerate}
\end{dfn}

The question that we are asking is, which special $\Pi^0_1$ subclasses
of the Cantor space $\oiNN$ have one or both of the properties in
Definition \ref{dfn:props}?  The purpose of this paper is to present
some partial results in this direction, as follows.  Let $\CPA$ be the
special $\Pi^0_1$ subclass of $\oiNN$ consisting of all complete,
consistent extensions of Peano Arithmetic.  We show that $\CPA$ has
both the Join Property and the Pseudojump Inversion Property.  More
generally, we show that any special $\Pi^0_1$ class which is Turing
degree isomorphic to $\CPA$ has both of these properties.  For
example, this holds for the $\Pi^0_1$ class $\mathrm{CZF}$ consisting
of all complete, consistent extensions of Zermelo-Fraenkel Set Theory
(assuming that $\mathrm{CZF}$ is nonempty), and for the $\Pi^0_1$
class $\mathrm{DNR}_2$ consisting of all $\oi$-valued diagonally
nonrecursive functions.  On the other hand, let $P$ be a special
$\Pi^0_1$ subclass of $\oiNN$ which is of positive measure.  Citing a
theorem of Nies and an observation of Patrick Lutz, we note that $P$
need not have the Join Property, but $P$ and indeed any special
$\Pi^0_1$ class which is Turing degree isomorphic to $P$ has the
Pseudojump Inversion Property.  Finally, we construct a special
$\Pi^0_1$ subclass of $\oiNN$ which has neither the Join Property nor
the Pseudojump Inversion Property.  We do not know whether there
exists a special $\Pi^0_1$ subclass of $\oiNN$ which has the Join
Property but not the Pseudojump Inversion Property.

Here is an outline of the paper.  In \S2 we present the classical
proofs of Theorems \ref{thm:join} and \ref{thm:pjinv}.  In \S3 we use
the G\"odel-Rosser Incompleteness Theorem \cite{mendelson} to prove
that $\CPA$ has the Join Property and the Pseudojump Inversion
Property.  In \S4 we present some results about Turing degree
isomorphism of $\Pi^0_1$ classes, and we apply these results to show
that any $\Pi^0_1$ class which is Turing degree isomorphic to $\CPA$
has the Join Property and the Pseudojump Inversion Property.  In \S5
we show that special $\Pi^0_1$ classes consisting of Martin-L\"{o}f
random reals have the Pseudojump Inversion Property but not the Join
Property.  In \S6 we use a priority argument to construct a special
$\Pi^0_1$ subclass $Q\subseteq\oiNN$ which has neither the Join
Property nor the Pseudojump Inversion Property.

\begin{rem}
  The lattice $\Ew$ of all Muchnik degrees of nonempty $\Pi^0_1$
  subclasses of $\oiNN$ is of great interest; see for instance the
  references in \nocite{cie2015}\cite{dou-tutorial}.  Therefore, it is
  natural to ask about Muchnik degrees in $\Ew$ corresponding to the
  properties in Definition \ref{dfn:props}.  Our results in this paper
  shed some light on this question.  First, it is known \cite[Theorem
  6.8]{massrand} that any $\Pi^0_1$ subclass of $\oiNN$ which is
  Muchnik equivalent to $\CPA$ -- i.e., of Muchnik degree
  $\mathbf{1}$, where $\mathbf{1}$ denotes the top degree in $\Ew$ --
  is Turing degree isomorphic to $\CPA$.  Therefore, our results in
  \S4 below imply that all $\Pi^0_1$ classes of Muchnik degree
  $\mathbf{1}$ have both of the properties in Definition
  \ref{dfn:props}.  Second, it is easy to see that each degree in
  $\Ew$ contains a $\Pi^0_1$ subclass of $\oiNN$ which includes $\CPA$
  and hence again has both properties.  On the other hand, by \S5
  below we have a nonzero degree $\mathbf{r}\in\Ew$ containing a
  special $\Pi^0_1$ subclass of $\oiNN$ which has one property but not
  the other.  Moreover, in \S6 below we construct a nonzero degree in
  $\Ew$ containing a special $\Pi^0_1$ subclass of $\oiNN$ which does
  not have either property.
\end{rem}

In the remainder of this section, we fix some notation and
terminology.

We write $f:{\subseteq}A\to B$ to mean that $f$ is a \emph{partial
  function} from $A$ to $B$, i.e., a function with domain $\subseteq
A$ and range $\subseteq B$.  For $a\in A$ we write $f(a){\downarrow}$
and say that $f(a)$ \emph{converges} or $f(a)$ is \emph{defined}, if
$a\in$ the domain of $f$.  Otherwise we write $f(a){\uparrow}$ and say
that $f(a)$ \emph{diverges} or $f(a)$ is \emph{undefined}.  For $a\in
A$ and $f,g:{\subseteq}A\to B$, we write $f(a)\simeq g(a)$ to
mean that either $(f(a){\downarrow}$ and $g(a){\downarrow}$ and
$f(a)=g(a))$ or $(f(a){\uparrow}$ and $g(a){\uparrow})$.  We write
$f(a){\downarrow}=b$ to mean that $f(a){\downarrow}$ and $f(a)=b$.

The set of all natural numbers is $\NN=\{0,1,2,\ldots\}$.  For any set
$A$ let $A^\NN$ denote the set of all sequences $X:\NN\to A$.  Thus
$\NNNN$ and $\oiNN$ are the \emph{Baire space} and the \emph{Cantor
  space} respectively.  The points $X\in\NNNN$ or $X\in\oiNN$ are
sometimes called \emph{reals}.  We sometimes identify $\oiNN$ with the
powerset of $\NN$ in the usual manner.  For $X,Y\in A^\NN$ the
\emph{join} $X\oplus Y\in A^\NN$ is given by $(X\oplus Y)(2n)=X(n)$,
$(X\oplus Y)(2n+1)=Y(n)$, and for $P,Q\subseteq A^\NN$ we write
$P\times Q=\{X\oplus Y\mid X\in P$ and $Y\in Q\}$.  Note in particular
that $P\times Q\subseteq\NNNN$ whenever $P,Q\subseteq\NNNN$, and
$P\times Q\subseteq\oiNN$ whenever $P,Q\subseteq\oiNN$.

For any set $A$ let $A^\ast$ be the set of \emph{strings}, i.e.,
finite sequences, of elements of $A$.  For $a_0,\ldots,a_{m-1}\in A$
we let $\la a_0,\ldots,a_{m-1}\ra$ denote the string $\sigma\in
A^\ast$ of length $m$ given by $\sigma(i)=a_i$ for all $i<m$.  In this
case we write $|\sigma|=m$, and for each $k\le m$ we write $\sigma\res
k=\la a_0,\ldots,a_{k-1}\ra$.  We also write $A^m=\{\sigma\in
A^\ast\mid|\sigma|=m\}$ and $A^{\ge m}=\{\sigma\in
A^\ast\mid|\sigma|\ge m\}$ and $A^{<m}=\{\sigma\in
A^\ast\mid|\sigma|<m\}$.  For strings $\sigma=\la
a_0,\ldots,a_{m-1}\ra$ and $\tau=\la b_0,\ldots, b_{n-1}\ra$, their
\emph{concatenation} is $\sigma\cat\tau=\la a_0,\ldots,
a_{m-1},b_0,\ldots,b_{n-1}\ra$.  Note that
$|\sigma\cat\tau|=|\sigma|+|\tau|$.  We say that $\sigma$ is an
\emph{initial segment} of $\tau$, or equivalently $\tau$ is an
\emph{extension} of $\sigma$, written $\sigma\subseteq\tau$, if
$\tau\res|\sigma|=\sigma$, i.e., if $\tau=\sigma\cat\rho$ for some
string $\rho$.  We write $\sigma\subset\tau$ to mean that
$\sigma\subseteq\tau$ and $\sigma\ne\tau$.  We say that $\sigma\in
A^\ast$ is an \emph{initial segment} of $X\in A^\NN$, or equivalently
$X$ is an \emph{extension} of $\sigma$, written $\sigma\subset X$, if
$X\res|\sigma|=\sigma$, i.e., if $\sigma=\la
X(0),\ldots,X(|\sigma|-1)\ra$.

Fix a standard recursive enumeration $\la\varphi_e^{(k)}\mid
e\in\NN\ra$ of the $k$-place partial recursive functions
$\psi:{\subseteq}\NN^k\to\NN$.  Here $e$ is called an \emph{index} of
$\varphi_e^{(k)}$.  Likewise, for $X\in\NNNN$ we have a standard
recursive enumeration $\la\varphi_e^{(k),X}\mid e\in\NN\ra$ of the
$k$-place partial functions $\psi:{\subseteq}\NN^k\to\NN$ which are
partial recursive relative to $X$.  Here $e$ is again called an
\emph{index} of $\varphi_e^{(k),X}$, while $X$ is called an
\emph{oracle}.  We write
$W_e^X=\{i\in\NN\mid\varphi_e^{(1),X}(i){\downarrow}\}$ and this gives
a standard recursive enumeration of the subsets of $\NN$ which are
recursively enumerable relative to $X$.  We also write
$P_e=\{X\in\NNNN\mid\varphi_e^{(1),X}(0){\uparrow}\}$ and this gives a
standard recursive enumeration of the $\Pioi$ subclasses of $\NNNN$.

For $X\in\NNNN$ let $X'\in\NNNN$ denote the \emph{Turing jump} of $X$.
For definiteness we take $X'$ to be (the characteristic function of)
the set
\begin{center}
  $\{e\in\NN\mid0\in W_e^X\}=\{e\in\NN\mid X\notin P_e\}$.
\end{center}
For each $e\in\NN$ we have a \emph{pseudojump operator}
$J_e:\NNNN\to\NNNN$ given by $J_e(X)=X\oplus W_e^X$.  We write $\leT$
for \emph{Turing reducibility} and $\eqT$ for \emph{Turing
  equivalence}.  We write $\lett$ for \emph{truth-table
  reducibility}\footnote{See for instance \cite[\S\S8.3,9.6]{rogers}
  and \cite[\S\S4,5]{massrand}.} and $\eqtt$ for \emph{truth-table
  equivalence}.

\section{The Join Theorem and the Pseudojump Inversion Theorem}

In this section we present the classical proofs of the Join
Theorem and the Pseudojump Inversion Theorem.  Later we shall build on
these proofs in order to obtain our refinements.

We begin with the Join Theorem, Theorem \ref{thm:join}.

\begin{proof}[Proof of Theorem \ref{thm:join}]
  Let $A$ and $Z$ be reals such that $A\geT Z\oplus0'$ and $Z\gT0$.
  It will suffice to find a real $B$ such that $A\eqT B'\eqT
  B\oplus0'\eqT B\oplus Z$.  We shall construct a sequence of strings
  $\sigma_0\subset\sigma_1\subset
  \cdots\subset\sigma_j\subset\sigma_{j+1}\subset\cdots$ in
  $\NN^\ast$, and the desired real $B\in\NNNN$ will be obtained as the
  limit $\bigcup_j\sigma_j$ of this sequence.

  Let us regard $Z$ as a subset of $\NN$.  Since $Z$ is not
  recursive, at least one of $Z$ and its complement
  $\NN\setminus Z$ is not $\Sigma^0_1$.  And then, since $Z$ is
  Turing equivalent to $\NN\setminus Z$, we may safely assume
  that $\NN\setminus Z$ is not $\Sigma^0_1$.
  
  Stage $j=0$.  Let $\sigma_0 = \la\ra$.

  Stage $j=2n+1$.  By induction we have $\sigma_{2n}$.  Define
  \begin{center}
    $S_n = \{ i \in \NN \mid \exists \sigma\, (\sigma \supseteq
    \sigma_{2n} \cat \la\underbrace{0,\ldots,0}_i,1\ra$ and
    $\varphi_{n,|\sigma|}^{(1),\sigma}(0) \downarrow)\}$
  \end{center}
  which is $\Sigma^0_1$.  Since $\NN\setminus Z$ is not $\Sigma^0_1$,
  there are infinitely many $i$ such that $i\in Z\liff i\in
  S_n$.  Choose the first such $i$, and use this $i$ to define
  $\sigma_{2n+1}$ as follows.  If $i\in Z$, then $i\in S_n$ so let
  $\sigma_{2n+1}=$ the least
  $\sigma\supseteq\sigma_{2n}\cat\la\underbrace{0,\ldots,0}_i,1\ra$
  such that $\varphi_{n,|\sigma|}^{(1),\sigma}(0) \downarrow$.  If
  $i\notin Z$, let
  $\sigma_{2n+1}=\sigma_{2n}\cat\la\underbrace{0,\ldots,0}_i,1\ra$.

  Stage $j=2n+2$.  Let $\sigma_{2n+2}=\sigma_{2n}\cat\la A(n)\ra$.

  This completes the definition of $\sigma_j$ for all $j\in\NN$.  We
  now show that $B=\bigcup_j\sigma_j$ has the desired properties.  We
  begin with the following observations.
  \begin{itemize}
  \item If $\sigma_{2n}$ is known, then the $i$ in stage $2n+1$ can be
    found recursively in $Z\oplus 0'$ (because $S_n$ is computable
    from $0'$), or recursively in $B$ (because there is exactly one
    $i$ such that
    $\sigma_{2n}\cat\la\underbrace{0,\ldots,0}_i,1\ra\subset B$).  And
    then, using this $i$, $\sigma_{2n+1}$ can be found recursively in
    $Z$ (to test whether $i\in Z$), or recursively in $0'$ (to test
    whether $i\in S_n$).
  \item If $\sigma_{2n+1}$ is known, then $\sigma_{2n+2}$ can be found
    recursively in $A$ (by the definition of $\sigma_{2n+2}$), or
    recursively in $B$ (because
    $\sigma_{2n+2}=\sigma_{2n+1}\cat\la
    B(|\sigma_{2n+1}|\ra$), hence also recursively in $B\oplus Z$
    or in $B\oplus0'$.
  \end{itemize}
  Combining these observations, we see that the entire sequence
  $\la\sigma_j\mid j\in\NN\ra$ is $\leT B\oplus
  Z$, and $\leT B\oplus0'$, and $\leT A$ (using the
  hypothesis $Z\oplus0'\leT A$).  We also have:
  \begin{itemize}
  \item $B\oplus Z\leT A$, because $B=\bigcup_j\sigma_j\leT A$ and by
    hypothesis $Z\leT A$.
  \item $B'\leT\la\sigma_j\mid j\in\NN\ra$,
    because $n\in
    B'\liff\varphi_{n,|\sigma_{2n+1}|}^{(1),\sigma_{2n+1}}(0)
    \downarrow$.
  \item $A\leT\la\sigma_j\mid j\in\NN\ra$,
    because $A(n)=\sigma_{2n+2}(|\sigma_{2n+1}|)$ for all $n$.
  \end{itemize}
  Thus $A\leT\la\sigma_j\mid j\in\NN\ra\leT
  B\oplus Z\leT A$, and $B'\leT\la\sigma_j\mid
  j\in\NN\ra\leT B\oplus0'\leT B'$, hence
  $A\eqT B'\eqT B\oplus0'\eqT B\oplus Z$, Q.E.D.
\end{proof}

We now turn to Pseudojump Inversion, Theorem \ref{thm:pjinv}.

\begin{proof}[Proof of Theorem \ref{thm:pjinv}]
  Fix $e\in\NN$, and let $A$ be a real such that $A\geT0'$.  It will
  suffice to find a real $B$ such that $A\eqT J_e(B)\eqT B\oplus0'$.
  We shall construct a sequence of strings
  $\sigma_0\subseteq\sigma_1\subseteq
  \cdots\subseteq\sigma_j\subseteq\sigma_{j+1}\subseteq\cdots$ in
  $\NN^\ast$, and the desired real $B\in\NNNN$ will be obtained as the
  limit $\bigcup_j\sigma_j$ of this sequence.

  Stage $0$. Let $\sigma_0 = \la\ra$.

  Stage $2n+1$. Let $\sigma_{2n+1}=$ the least string
  $\sigma\supseteq\sigma_{2n}$ for which $n\in W_{e,|\sigma|}^\sigma$
  if such a $\sigma$ exists. Otherwise let
  $\sigma_{2n+1}=\sigma_{2n}$.

  Stage $2n+2$.  Let $\sigma_{2n+2}=\sigma_{2n+1}\cat\la A(n)\ra$.

  We now show that $B=\bigcup_j\sigma_j$ has the desired properties.
  We begin with the following observations.
  \begin{itemize}
  \item If $\sigma_{2n}$ is known, then in stage $2n+1$ the question
    of whether $\sigma$ exists can be answered recursively in $0'$
    (because the question is $\Sigma^0_1$), or recursively in $J_e(B)$
    (because $\sigma$ exists if and only if $n\in W_e^B$), hence also
    recursively in $A$ (because by hypothesis $0'\leT A$), or
    recursively in $B\oplus 0'$.  And once the existence or
    nonexistence of $\sigma$ is known, $\sigma_{2n+1}$ can be found
    recursively.
  \item If $\sigma_{2n+1}$ is known, then $\sigma_{2n+2}$ can be found
    recursively in $A$ (by the definition of $\sigma_{2n+2}$), or
    recursively in $B$ (because
    $\sigma_{2n+2}=\sigma_{2n+1}\cat\la
    B(|\sigma_{2n+1}|\ra$), hence also recursively in $J_e(B)$
    (because $B\leT J_e(B)$), or in $B\oplus0'$.
  \end{itemize}
  Combining these observations, we see that the entire sequence
  $\la\sigma_j\mid j\in\NN\ra$ is $\leT J_e(B)$,
  and $\leT B\oplus0'$, and $\leT A$.  We also have:
  \begin{itemize}
  \item $B\oplus0'\leT A$, because $B=\bigcup_j\sigma_j\leT A$ and by
    hypothesis $0'\leT A$.
  \item $J_e(B)\leT\la\sigma_j\mid j\in\NN\ra$, because $n\in W_e^B$
    if and only if $n\in W_{e,|\sigma_{2n+1}|}^{\sigma_{2n+1}}$.
  \item $A\leT\la\sigma_j\mid j\in\NN\ra$, because
    $A(n)=\sigma_{2n+2}(|\sigma_{2n+1}|)$ for all $n$.
  \end{itemize}
  Thus $A\leT\la\sigma_j\mid j\in\NN\ra\leT
  B\oplus0'\leT A$ and $J_e(B)\leT\la\sigma_j\mid
  j\in\NN\ra\leT J_e(B)$, hence $A\eqT
  J_e(B)\eqT B\oplus0'$, Q.E.D.
\end{proof}

\section{Completions of Peano Arithmetic}

Let $\PA$ denote Peano Arithmetic, and let $\Sent_\PA$ denote the set
of sentences of the language of $\PA$.  An \emph{extension of $\PA$}
is a set $T\subseteq\Sent_\PA$ which includes the axioms of $\PA$ and
is closed under logical consequence.  For any such $T$, a
\emph{completion of $T$} is an extension $X$ of $\PA$ which includes
$T$ and such that for each $\varphi\in\Sent_\PA$ exactly one of the
sentences $\varphi$ and $\lnot\,\varphi$ belongs to $X$.  Given an
extension $T$ of $\PA$, let $\Comp_T$ denote the set of all
completions of $T$.  By Lindenbaum's Lemma, $\Comp_T\ne\emptyset$ if
and only if $T$ is consistent.

Fix a primitive recursive G\"odel numbering
$\#:\varphi\mapsto\#(\varphi):\Sent_\PA\to\NN$.  For convenience we
shall assume that $\#$ is a one-to-one correspondence between
$\Sent_\PA$ and $\NN$.  This induces a one-to-one correspondence
between subsets $X$ of $\Sent_\PA$ and subsets of $\NN$, namely
$X\mapsto\{\#(\varphi)\mid\varphi\in X\}$.  And of course subsets of
$\NN$ are identified with their characteristic functions in $\oiNN$.
Therefore, letting $\CPA$ be the subset of $\oiNN$ corresponding to
$\Comp_\PA$, we see that $\CPA$ is a special $\Pi^0_1$ subclass of
$\oiNN$.

The purpose of this section is to prove that $\CPA$ has the Join
Property and the Pseudojump Inversion Property.  The Join Property
will follow easily from Theorem \ref{thm:join} plus other known
results, but for the Pseudojump Inversion Property we shall use a
construction involving the G\"odel-Rosser Incompleteness Theorem
\cite{mendelson}.

\begin{dfn}[$\PA$-degrees]
  By a \emph{$\PA$-degree} we mean the Turing degree $\degT(X)$ of
  some $X\in\CPA$.  A set $S$ of Turing degrees is said to be
  \emph{upwardly closed} if for all Turing degrees $\aaa$ and $\bbb$
  such that $\aaa\le\bbb$, $\aaa\in S$ implies $\bbb\in S$.
\end{dfn}

\begin{lem}\label{lem:solovay}
  The set of all $\PA$-degrees is upwardly closed.
\end{lem}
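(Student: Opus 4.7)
The plan is: given $\aaa\le\bbb$ with $\aaa$ a $\PA$-degree, fix $X\in\CPA$ with $\degT(X)=\aaa$ and $Y$ with $\degT(Y)=\bbb$, so that $Y\geT X$. We shall construct $W\in\CPA$ with $W\eqT Y$; since $\degT(W)=\bbb$, this witnesses that $\bbb$ is a $\PA$-degree.

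Enumerate $\Sent_\PA$ as $\la\varphi_n\mid n\in\NN\ra$. The construction is a $Y$-recursive chain $T_0\subset T_1\subset\cdots$ of finite consistent extensions of $\PA$, with $W$ the deductive closure of $\bigcup_n T_n$. At stage $n$ we do two things. First, we decide $\varphi_n$ while preserving consistency: the completions of $T_n$ form a nonempty $\Pi^0_1$ class, and since $X$ is of $\PA$-degree the $\PA$ basis theorem supplies an $X$-computable completion $\hat T_n\supseteq T_n$; we let $T_n^\ast=T_n+\varphi_n$ if $\varphi_n\in\hat T_n$, and $T_n^\ast=T_n+\lnot\varphi_n$ otherwise, so that $T_n^\ast\subseteq\hat T_n$ is consistent. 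Second, we encode $Y(n)$ into $W$: by the G\"odel--Rosser Incompleteness Theorem applied to the consistent extension $T_n^\ast$ of $\PA$, we effectively compute from $T_n^\ast$ a Rosser sentence $\rho_n$ independent of $T_n^\ast$, and set $T_{n+1}=T_n^\ast+\rho_n$ if $Y(n)=1$ and $T_{n+1}=T_n^\ast+\lnot\rho_n$ otherwise; either choice remains consistent by the choice of $\rho_n$.

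The verifications are short. By construction $W$ is a consistent completion of $\PA$, so $W\in\CPA$. The sequence $\la T_n\mid n\in\NN\ra$ is recursive in $Y$ (using $X\leT Y$ to obtain $\hat T_n$, and an effective Rosser construction to obtain $\rho_n$), so $W\leT Y$. Conversely, from $W$ we inductively reconstruct each $T_n$ and $T_n^\ast$ (reading off $W$'s decision on $\varphi_n$ at each step), recompute $\rho_n$ from $T_n^\ast$, and recover $Y(n)$ from whether $\rho_n\in W$, so $Y\leT W$.

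The one genuinely substantive step is the use of the $\PA$ basis theorem to extract an $X$-computable completion $\hat T_n$ of the finite extension $T_n$; this is the only place where $X$'s being of $\PA$-degree is used in an essential way, and without it one could not simultaneously preserve consistency and code $Y$. The remaining ingredients -- enumerating $\Sent_\PA$, running the Rosser construction, and the two-way reducibility check -- are routine, provided one is careful that the same deterministic recipe is used both when building the $T_n$ from $Y$ and when recovering them from $W$.
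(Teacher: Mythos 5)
Your proof is correct. Note, though, that the paper does not prove this lemma at all: it attributes it to Solovay and cites Downey--Hirschfeldt (Theorem 2.21.3) and Simpson (Corollary 6.6 of the mass problems paper) for proofs, later remarking that those proofs use a version of the splitting machinery of Lemma \ref{lem:splitting}. Your blind argument is, in essence, exactly that standard Solovay argument: use the hypothesis that $X$ has $\PA$-degree (via the Scott basis theorem, applied to the $\Pi^0_1$ class of completions of the current finite extension $T_n$) to decide $\varphi_n$ consistently, and use G\"odel--Rosser sentences to code $Y$ into the resulting completion $W$, so that $W\in\CPA$ and $W\eqT Y$. This is the same G\"odel--Rosser splitting idea the paper itself deploys in Lemma \ref{lem:splitting} and Theorem \ref{thm:pjinv-cpa}, so your route is the ``direct'' proof the paper chose to outsource. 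Two small points you should make explicit: (i) for $W\leT Y$ you need the Scott basis theorem in its uniform form, i.e.\ a single $X$-recursive procedure that, from (a canonical index for) the finite list of sentences added to $\PA$ in $T_n$, decides $\varphi_n$ consistently -- this uniformity is standard but is the load-bearing step; and (ii) $W\leT Y$ follows not merely because $\la T_n\ra\leT Y$ (deductive closure is in general only $\Sigma^0_1$ in the axioms) but because $W$ is complete and the construction decides $\varphi_n$ at stage $n$, so membership in $W$ is read off from the stage-$n$ decision. Your closing caution about determinism matters only for the direction $W\leT Y$; the recovery of $Y$ from $W$ is automatic, since $W\supseteq T_{n+1}$ and consistency force $W$'s verdicts on $\varphi_n$ and $\rho_n$ to agree with the constructed ones.
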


\begin{proof}
  This result is originally due to Robert M. Solovay.  For a proof see
  \cite[Theorem 2.21.3]{do-hi-book} or \cite[Corollary 6.6]{massrand}.
\end{proof}

\begin{lem}\label{lem:upward-join}
  Let $P\subseteq\oiNN$ be a nonempty $\Pioi$ class such that
  $\{\degT(X)\mid X\in P\}$ is upwardly closed.  Then $P$ has the Join
  Property.
\end{lem}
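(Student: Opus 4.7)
My plan is to combine the unrestricted Join Theorem (Theorem \ref{thm:join}) with a relativized form of the Low Basis Theorem and the upward closure hypothesis. Given reals $A$ and $Z$ with $A\geT Z\oplus0'$ and $Z\gT 0$, I first apply Theorem \ref{thm:join} to produce a real $B_0$ satisfying $A\eqT B_0'\eqT B_0\oplus 0'\eqT B_0\oplus Z$. The remaining task is to pass from $B_0$ to some $B\in P$ witnessing the same three equivalences. Upward closure of $\{\degT(X)\mid X\in P\}$ reduces this to exhibiting a single real $\widetilde B$ that satisfies the three equivalences and whose Turing degree lies above $\degT(C)$ for some $C\in P$.

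I would engineer such a $\widetilde B$ by adjoining a carefully chosen member of $P$ to $B_0$. Since $P$ is a nonempty $\Pi^0_1$ class it is a fortiori a nonempty $\Pi^0_1$ class relative to $B_0$, so the Low Basis Theorem relativized to $B_0$ yields $C\in P$ with $(B_0\oplus C)'\leT B_0'$; the reverse reducibility is automatic, so in fact $(B_0\oplus C)'\eqT B_0'$. Set $\widetilde B=B_0\oplus C$. Then $C\leT(B_0\oplus C)'\eqT B_0'\eqT A$, and consequently the equivalences $\widetilde B\oplus 0'\eqT A$ and $\widetilde B\oplus Z\eqT A$ follow immediately from $B_0\oplus 0'\eqT A$, $B_0\oplus Z\eqT A$ and $C\leT A$, while $\widetilde B'\eqT A$ is exactly the relativized lowness.

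To finish, observe that $C\in P$ so $\degT(C)\in\{\degT(X)\mid X\in P\}$; since $\degT(\widetilde B)\ge\degT(C)$, upward closure puts $\degT(\widetilde B)$ into this same set, so there is some $B\in P$ with $\degT(B)=\degT(\widetilde B)$, and any such $B$ witnesses the Join Property for $A$ and $Z$. The one place where something could go wrong is the choice of $C$: if one used only the unrelativized Low Basis Theorem one would merely get $C\leT 0'$, which is not enough to absorb $C$ into $A$ at the level of the jump $B_0'\eqT A$. Passing to the oracle $B_0$ in the Low Basis Theorem is exactly what makes the argument work.
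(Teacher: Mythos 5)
Your proposal is correct and follows essentially the same route as the paper: apply the unrestricted Join Theorem, then the Low Basis Theorem relativized to the resulting real to pick a member of $P$ whose join with it is low over it, and finally use upward closure of $\{\degT(X)\mid X\in P\}$ to replace that join by an element of $P$ of the same degree. Apart from the naming of the reals, the argument and the key observation about needing the \emph{relativized} Low Basis Theorem coincide with the paper's proof.
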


\begin{proof}
  Let $A$ and $Z$ be reals such that $A\geT Z\oplus0'$ and $Z\gT0$.
  By Theorem \ref{thm:join} there exists a real $C$ such that $A\eqT
  C'\eqT C\oplus0'\eqT C\oplus Z$.  By the Low Basis Theorem
  \cite[Theorem 2.1]{js} relativized to $C$, there exists $B_0\in P$
  such that $C'\eqT(B_0\oplus C)'\eqT B_0\oplus C\oplus0'$.  Since
  $B_0\leT B_0\oplus C$, we can find $B\in P$ such that $B\eqT
  B_0\oplus C$, hence $C'\eqT B'\eqT B\oplus0'$.  We now have $C'\eqT
  C\oplus Z\leT B\oplus Z\leT C'\oplus Z\eqT C'$, hence $A\eqT C'\eqT
  B'\eqT B\oplus0'\eqT B\oplus Z$, Q.E.D.
\end{proof}

\begin{thm}\label{thm:join-cpa}
  $\CPA$ has the Join Property.
\end{thm}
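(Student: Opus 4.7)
The proof is essentially a one-line combination of the two lemmas that immediately precede the statement. By definition a $\PA$-degree is the Turing degree of some $X \in \CPA$, so the set $\{\degT(X) \mid X \in \CPA\}$ is exactly the set of $\PA$-degrees, which by Lemma \ref{lem:solovay} is upwardly closed. Also, $\CPA$ is a $\Pioi$ subclass of $\oiNN$ and is nonempty, since Lindenbaum's Lemma guarantees the existence of a completion of the consistent theory $\PA$. Therefore $P = \CPA$ satisfies the hypotheses of Lemma \ref{lem:upward-join}, and the conclusion that $\CPA$ has the Join Property follows at once.

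There is no real obstacle to overcome at this point: the work has already been done upstream. Solovay's upward-closure theorem supplies the only genuinely hard ingredient, and it is imported as a black box through Lemma \ref{lem:solovay}. The only thing I would double-check when writing up the proof is that the identification of $\Comp_\PA$ with a subset of $\oiNN$ via the G\"odel numbering indeed preserves the statement ``$\{\degT(X) \mid X \in \CPA\}$ equals the set of $\PA$-degrees,'' which is immediate from the definition given at the start of \S3. So the proof I would write is little more than the two sentences ``$\CPA$ is a nonempty $\Pioi$ subclass of $\oiNN$, and by Lemma \ref{lem:solovay} the set $\{\degT(X) \mid X \in \CPA\}$ is upwardly closed. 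Apply Lemma \ref{lem:upward-join}.''
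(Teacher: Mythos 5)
Your proposal is correct and is exactly the paper's own argument: the paper proves Theorem \ref{thm:join-cpa} by noting that it is immediate from Lemma \ref{lem:solovay} (upward closure of the $\PA$-degrees) together with Lemma \ref{lem:upward-join}. Your additional checks (nonemptiness of $\CPA$ via Lindenbaum's Lemma and the identification of $\{\degT(X)\mid X\in\CPA\}$ with the set of $\PA$-degrees) are harmless and accurate.
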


\begin{proof}
  This is immediate from Lemmas \ref{lem:solovay} and
  \ref{lem:upward-join}.
\end{proof}

Next we shall prove that $\CPA$ has the Pseudojump Inversion Property.
The proof will be presented in terms of completions of recursively
axiomatizable theories.  An extension $T$ of $\PA$ is said to be
\emph{recursively axiomatizable} if there is a recursive set of
sentences $S\subseteq\Sent_\PA$ such that $T$ is the closure of $S$
under logical consequence.  In this case $\Comp_T$ is clearly a
$\Pi^0_1$ subclass of $\CPA$.  The converse also holds:

\begin{lem}
  \label{lem:comp}
  We have an effective one-to-one correspondence $T\mapsto\Comp_T$
  between recursively axiomatizable extensions of $\PA$ and $\Pi^0_1$
  subclasses of $\CPA$.
\end{lem}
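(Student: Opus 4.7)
The plan is to produce explicit maps $T\mapsto\Comp_T$ and $P\mapsto T_P$, verify that they are mutually inverse, and note that both are effective.

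For the forward direction, given a recursive axiomatization $S$ of $T$, I would observe that $\Comp_T=\{X\in\CPA\mid S\subseteq X\}$, since each $X\in\CPA$ is deductively closed and so contains $T$ iff it contains $S$. The failure of ``$S\subseteq X$'' is recursively enumerable in $X$ (wait for some $\varphi\in S$ to be absent from $X$), so $\Comp_T$ is $\Pioi$, with a $\Pioi$ index computed uniformly from a recursive index of $S$.

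For the backward direction, given a $\Pioi$ class $P\subseteq\CPA$, set $T_P=\bigcap_{X\in P}X$ (with the convention that the empty intersection is $\Sent_\PA$). I would verify two claims. First, $\Comp_{T_P}=P$: the inclusion $P\subseteq\Comp_{T_P}$ is immediate, and if $X\in\CPA\setminus P$ then openness of $\oiNN\setminus P$ yields a basic clopen neighborhood of $X$, specified by the truth values in $X$ of finitely many sentences $\varphi_1,\ldots,\varphi_n$, that is disjoint from $P$; the corresponding conjunction $\psi$ of the $\varphi_i$ or $\neg\varphi_i$ that agree with $X$ satisfies $\psi\in X$ while $\neg\psi\in Y$ for every $Y\in P$, so $\neg\psi\in T_P\setminus X$. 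Second, and this is the main content, $T_P$ is recursively enumerable and hence, via Craig's trick, recursively axiomatizable: $\varphi\in T_P$ iff the $\Pioi$ class $P\cap\{X\in\CPA\mid\neg\varphi\in X\}$ is empty, and by effective compactness of Cantor space---any $\Pioi$ subclass of $\oiNN$ is the set of paths through a computable tree and is empty iff that tree is finite---emptiness of a $\Pioi$ class is $\Sioi$ uniformly in its index. Hence ``$\varphi\in T_P$'' is $\Sioi$ uniformly in $\varphi$ and a $\Pioi$ index for $P$, which yields a recursive axiomatization of $T_P$ computable from $P$.

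Injectivity of $T\mapsto\Comp_T$ follows from the identity $T=\bigcap_{X\in\Comp_T}X$, a standard consequence of Lindenbaum's Lemma: any sentence outside a consistent $T$ can be avoided by some completion of $T$, and if $T$ is inconsistent then $T=\Sent_\PA=\bigcap\emptyset$. Effectiveness of both directions is built into the constructions above. The main obstacle is showing that $T_P$ is recursively enumerable, which rests on the effective compactness of $\oiNN$; everything else is essentially bookkeeping.
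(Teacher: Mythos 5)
Your proposal is correct and follows the same line of reasoning as the paper's proof: define $T_P=\bigcap_{X\in P}X$, use effective compactness to show $T_P$ is recursively enumerable, and appeal to Lindenbaum's Lemma to recover $P$ as $\Comp_{T_P}$. You go somewhat further than the paper by explicitly verifying the forward direction ($\Comp_T$ is $\Pi^0_1$), spelling out the topological argument for $\Comp_{T_P}\subseteq P$, and checking injectivity, none of which the paper does in detail, but these are routine and the core idea is identical.
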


\begin{proof}
  Given a $\Pi^0_1$ class $P\subseteq\CPA$, we shall exhibit a
  recursively axiomatizable extension $T$ of $\PA$ such that
  $\Comp_T=P$.  Namely, let $T=\{\varphi\in\Sent_\PA\mid
  X(\#(\varphi))=1$ for all $X\in P\}$.  To see that $T$ is closed
  under logical consequence, it suffices to note that each $X\in P$
  belongs to $\CPA$ and is therefore closed under logical consequence.
  We also have $\PA\subseteq T$, because $P\subseteq\CPA$.  Moreover
  $\Comp_T=P$ by Lindenbaum's Lemma.  From the effective compactness
  of $P$ we see that $T$ is recursively enumerable.  Hence $T$ is
  recursively axiomatizable, and from an index of $P$ as a $\Pi^0_1$
  class we can compute an index for a recursive axiomatization of $T$.
  This completes the proof.
\end{proof}

\begin{rem}
  The idea of Lemma \ref{lem:comp} applies much more generally.  Given
  any language $L$ and any $L$-theory $T$, an \emph{extension of $T$}
  is any set $\widetilde{T}\subseteq\Sent_L$ which includes $T$ and is
  closed under logical consequence.  Regarding $\Comp_T$ as a closed
  set in the product space $\oi^{\Sent_L}$, we have a one-to-one
  correspondence $\widetilde{T}\mapsto\Comp_{\widetilde{T}}$ between
  extensions of $T$ and closed subsets of $\Comp_T$.
\end{rem}

Recall from \S1 that $\la P_i\mid i\in\NN\ra$ is a
fixed standard recursive enumeration of the $\Pi^0_1$ subclasses of
$\oiNN$.

\begin{lem}[splitting property]
  \label{lem:splitting}
  There is a primitive recursive function
  $s:\NN\times\oi\to\NN$ such that for all
  $i\in\NN$, if $P_i$ is a nonempty $\Pi^0_1$ subclass of
  $\CPA$ then $P_{s(i,0)}$ and $P_{s(i,1)}$ are nonempty disjoint
  $\Pi^0_1$ subclasses of $P_i$.
\end{lem}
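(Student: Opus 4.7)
The plan is to combine Lemma~\ref{lem:comp} with the effective form of the G\"odel--Rosser Incompleteness Theorem. Given $i$, Lemma~\ref{lem:comp} (applied to the $\Pioi$ subclass $P_i\cap\CPA$ of $\CPA$) lets us compute primitive-recursively in $i$ an index for a recursive axiomatization of the extension $T_i$ of $\PA$ satisfying $\Comp_{T_i}=P_i\cap\CPA$. When $P_i$ is in fact a subclass of $\CPA$, we have $\Comp_{T_i}=P_i$, and $T_i$ is consistent if and only if $P_i\ne\emptyset$.

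Next, I invoke the effective G\"odel--Rosser Theorem to produce, primitive-recursively in an index for the axiomatization of $T_i$, a Rosser sentence $\rho_i\in\Sent_\PA$ with the property that whenever $T_i$ is consistent, neither $\rho_i$ nor $\lnot\,\rho_i$ is a consequence of $T_i$. Set $n_i=\#(\rho_i)$, and let $s(i,0)$ and $s(i,1)$ be primitive-recursive $\Pioi$ indices for the classes
\[
  P_i\cap\{X\in\oiNN\mid X(n_i)=1\}
  \quad\text{and}\quad
  P_i\cap\{X\in\oiNN\mid X(n_i)=0\}
\]
respectively. Since intersecting a $\Pioi$ class with a clopen set is a primitive recursive operation on $\Pioi$ indices, the function $s$ is primitive recursive.

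To verify the conclusion, suppose $P_i$ is a nonempty $\Pioi$ subclass of $\CPA$. Then $T_i$ is consistent and $\Comp_{T_i}=P_i$, so by the choice of $\rho_i$ neither $\rho_i$ nor $\lnot\,\rho_i$ is provable from $T_i$. Consequently $T_i\cup\{\rho_i\}$ and $T_i\cup\{\lnot\,\rho_i\}$ are both consistent, and Lindenbaum's Lemma supplies completions of each, which lie in $\Comp_{T_i}=P_i$ and assign $n_i$ the values $1$ and $0$ respectively. This shows $P_{s(i,0)}$ and $P_{s(i,1)}$ are nonempty, and they are manifestly disjoint subclasses of $P_i$.

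The main technical ingredient is the uniformity of the Rosser construction: starting from an index for a recursive axiomatization of an extension $T$ of $\PA$, one must primitive-recursively compute a specific sentence $\rho$ independent of $T$ whenever $T$ is consistent. This is standard, since $\rho$ is the fixed point, obtained via the diagonal lemma, of a primitive recursive provability formula, and the diagonal lemma itself is primitive recursive in the G\"odel numbering. Granted this, the remaining manipulations of $\Pioi$ indices are routine.
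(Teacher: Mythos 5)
Your proof is correct and follows essentially the same route as the paper: apply Lemma~\ref{lem:comp} to $P_i\cap\CPA$ and then the effective G\"odel--Rosser Theorem to get a sentence independent of the resulting theory whenever it is consistent. The only (harmless) difference is at the end: the paper converts the two extended theories back into $\Pioi$ indices via a second application of Lemma~\ref{lem:comp}, whereas you simply intersect $P_i$ with the clopen sets determined by the truth value of the Rosser sentence, which is a slightly more direct way of producing the indices $s(i,0),s(i,1)$.
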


\begin{proof}
  Lemma \ref{lem:comp} tell us that, given $i\in\NN$, we can
  effectively find a recursively axiomatizable extension $T$ of $\PA$
  such that $\Comp_T$ corresponds to $P_i\cap\CPA$.  We then apply the
  G\"odel-Rosser Incompleteness Theorem \cite{mendelson} to
  effectively find a sentence $\psi\in\Sent_\PA$ such that if $T$ is
  consistent, then neither $\psi$ nor $\lnot\,\psi$ belongs to $T$.
  Let $T_1$ (respectively $T_0$) be the closure of $T\cup\{\psi\}$
  (respectively $T\cup\{\lnot\,\psi\}$) under logical consequence.
  Clearly $\Comp_{T_1}$ and $\Comp_{T_0}$ are disjoint, and they are
  nonempty if $\Comp_T$ is nonempty.  Apply Lemma \ref{lem:comp} to
  effectively find $s(i,1),s(i,0)\in\NN$ such that $P_{s(i,1)}$ and
  $P_{s(i,0)}$ correspond to $\Comp_{T_1}$ and $\Comp_{T_0}$
  respectively.  This completes the proof.
\end{proof}

We shall now use this splitting property to redo Theorem
\ref{thm:pjinv} within $\CPA$.

\begin{thm}\label{thm:pjinv-cpa}
  $\CPA$ has the Pseudojump Inversion Property.
\end{thm}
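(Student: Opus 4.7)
The plan is to adapt the proof of Theorem \ref{thm:pjinv} to run inside $\CPA$, replacing the tower of string extensions $\sigma_0\subseteq\sigma_1\subseteq\cdots$ with a descending sequence of nonempty $\Pi^0_1$ subclasses $\CPA=Q_0\supseteq Q_1\supseteq\cdots$ of $\CPA$, constructed together with canonical indices $i_j$ so that $Q_j=P_{i_j}$. The desired real $B$ will be the unique element of $\bigcap_j Q_j$, and the construction will be organized into three interleaved kinds of stages so that the index sequence $\la i_j\mid j\in\NN\ra$ becomes Turing-equivalent to each of $A$, $J_e(B)$, and $B\oplus 0'$.

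The three kinds of stages are: (i) a \emph{forcing stage} for the pseudojump at $n$, where we form the $\Pi^0_1$ class $Q':=Q_{3n}\cap\{X:n\notin W_e^X\}$, decide whether it is nonempty using $0'\leT A$, and set $Q_{3n+1}=Q'$ (forcing $n\notin W_e^B$) if it is nonempty and $Q_{3n+1}=Q_{3n}$ otherwise (so every $X\in Q_{3n+1}$, hence $B$, satisfies $n\in W_e^X$); (ii) a \emph{coding stage} for $A(n)$, where we apply Lemma \ref{lem:splitting} and set $Q_{3n+2}=P_{s(i_{3n+1},A(n))}$, observing that the two halves of the splitting actually partition $Q_{3n+1}$ since every completion of the theory associated to $Q_{3n+1}$ decides the G\"odel-Rosser sentence exactly one way; and (iii) a \emph{refining stage} for the $n$th sentence $\varphi_n$ in a fixed enumeration of $\Sent_\PA$, where we use $0'$ to pick a canonical nonempty half of the partition $Q_{3n+2}=(Q_{3n+2}\cap\{X:X(\#(\varphi_n))=1\})\sqcup(Q_{3n+2}\cap\{X:X(\#(\varphi_n))=0\})$. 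The refining stages guarantee that $\bigcap_j Q_j$ is a singleton, which defines $B$.

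For the Turing equivalences, I would first check that the index sequence is $\leT A$ (each stage uses $0'$ or a bit of $A$), and that from the sequence one reads off $A$ (from the coding decisions, by comparing $i_{3n+2}$ to $s(i_{3n+1},0)$ and $s(i_{3n+1},1)$), $B$ (from the refining decisions), and $W_e^B$ (from the forcing decisions), giving $J_e(B),B\oplus 0'\leT A$. For the reverse: from $J_e(B)=B\oplus W_e^B$, the forcing decisions are read off $W_e^B$, the coding decisions are recovered from $B$ by running two parallel $B$-semi-decisions of nonmembership in the two disjoint $\Pi^0_1$ halves (exactly one halts because $B$ lies in exactly one half), and the refining decisions from the bits of $B$; from $B\oplus 0'$ the recovery is analogous with $0'$ replacing $W_e^B$ at the forcing stages.

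The main obstacle addressed by the refining stages is the need for $B$ to be uniquely determined by the index sequence, so that $B\leT A$ and hence $J_e(B)\leT A$. Without the refining stages, the G\"odel-Rosser sentences chosen at the coding stages need not exhaust $\Sent_\PA$, and $\bigcap_j Q_j$ could contain several completions, making $B$ fail to be $A$-computable. A secondary point of care is that at the forcing stage we cannot directly decide the $\Sigma^0_2$ question ``does some $X\in Q_{3n}$ satisfy $n\in W_e^X$?''; reformulating it as the $0'$-decidable $\Pi^0_1$-nonemptiness question about $Q'$ is what makes the construction go through with oracle $A$.
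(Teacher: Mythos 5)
Your proposal is correct and follows essentially the same route as the paper: a nested sequence of nonempty $\Pioi$ subclasses of $\CPA$ indexed by a sequence computed with forcing stages decided by $0'$, coding stages via the G\"odel--Rosser splitting of Lemma \ref{lem:splitting}, and recovery of $A$, $J_e(B)$, and $B\oplus0'$ from that index sequence. The only difference is organizational: the paper forces on $n\notin J_e(X)$ rather than $n\notin W_e^X$, which (since $J_e(X)=X\oplus W_e^X$ already contains $X$) decides every bit of $B$ automatically and thus renders your separate refining stages unnecessary.
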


\begin{proof}
  Fix $e\in\NN$, and let $A\in\oiNN$ be a real such that $A\geT0'$.
  It will suffice to find a real $B\in\CPA$ such that $A\eqT
  J_e(B)\eqT B\oplus0'$.  To find $B$, we shall construct a sequence
  of $\Pioi$ classes $Q_0\supseteq Q_1\supseteq\cdots\supseteq
  Q_j\supseteq Q_{j+1}\supseteq\cdots$, by induction on $j$ starting
  with $Q_0=\CPA\subseteq\oiNN$.  The intersection $\bigcap_jQ_j$ will
  be nonempty, and we shall have $B\in\bigcap_jQ_j$.  At the same
  time, we shall construct a function $f:\NN\to\NN$ such that
  $Q_j=P_{f(j)}$ for all $j$.

  Fix a primitive recursive splitting function $s$ as in Lemma
  \ref{lem:splitting}.

  Stage $0$.  Fix $f(0)\in\NN$ such that $P_{f(0)}=Q_0=\CPA$.

  Stage $2n+1$.  By induction we have $Q_{2n}=P_{f(2n)}$.  If
  $P_{f(2n)}\cap\{X\mid n\notin J_e(X)\}=\emptyset$ let
  $Q_{2n+1}=P_{f(2n)}$ and $f(2n+1)=f(2n)$.  Otherwise, let
  $Q_{2n+1}=P_{f(2n)}\cap\{X\mid n\notin J_e(X)\}$ and choose
  $f(2n+1)$ so that $P_{f(2n+1)}=P_{f(2n)}\cap\{X\mid n\notin
  J_e(X)\}$.  This $f(2n+1)$ is found primitive recursively from
  $f(2n)$ and $n$ and $e$.

  Stage $2n+2$.  By induction we have $Q_{2n+1}=P_{f(2n+1)}$.  Let
  $f(2n+2)=s(f(2n+1),A(n))$ and $Q_{2n+2}=P_{s(f(2n+1),A(n))}$, where
  $s$ is our splitting function.  This $f(2n+2)$ is found primitive
  recursively from $f(2n+1)$ and $A(n)$.

  This completes the construction.

  By construction each $Q_j$ is nonempty, so by compactness of
  $\oiNN$ their intersection $\bigcap_jQ_j$ is nonempty.  It remains
  to show that $B\in\bigcap_jQ_j$ has the desired properties.  We
  begin with the following observations.
  \begin{itemize}
  \item If $f(2n)$ is known then $f(2n+1)$ can be
    found recursively in $0'$ (by checking whether the $\Pi^0_1$ class
    $P_{f(2n)}\cap\{X\mid n\notin J_e(X)\}$ is empty or not), or
    recursively in $J_e(B)$ (by checking whether $n\in J_e(B)$ or
    not).
  \item If $f(2n+1)$ is known then $f(2n+2)$ can be found recursively
    in $B$ (by finding the $k\in\oi$ such that $B\notin
    P_{s(f(2n+1),k)}$), or recursively in $A$ (by evaluating
    $s(f(2n+1),A(n))$).
  \end{itemize}
  Combining these observations, we see that $f$ is $\leT
  A\oplus0'\eqT A$, and $\leT J_e(B)$, and $\leT
  B\oplus0'$.  Conversely, we also have:
  \begin{itemize}
  \item $A\leT f$, because $A(n)=i$ if and only if
    $f(2n+2)=s(f(2n+1),i)$.
  \item $J_e(B)\leT f$, because $n\in J_e(B)$ if and only if
    $f(2n+1)=f(2n)$.
  \item $B\oplus0'\leT f$, because $B\leT J_e(B)\leT
    f$ and $0'\leT A\leT f$.
  \end{itemize}
  Thus $f\eqT A\eqT J_e(B)\eqT B\oplus 0'$ and the proof is complete.
\end{proof}

\begin{rem}
  We have used the Splitting Lemma \ref{lem:splitting} to prove that
  $\CPA$ has the Pseudojump Inversion Property.  Similarly, it would
  be possible to use the Splittng Lemma to prove directly that $\CPA$
  has the Join Property.  This direct proof would be in contrast to
  our shorter but indirect proof in Theorem \ref{thm:join-cpa} above.
  Note however that a version of the Splitting Lemma is used in our
  proof of Solovay's Lemma \ref{lem:solovay}; see \cite[\S6]{massrand}
  and \cite[\S3]{pizowkl}.
\end{rem}

\section{Turing degree isomorphism}

\begin{dfn}[Turing degree isomorphism]
  Following \cite{js2} we say that $P,Q\subseteq\NNNN$ are
  \emph{Turing degree isomorphic} if $\{\degT(X)\mid X\in
  P\}=\{\degT(X)\mid X\in Q\}$.
\end{dfn}

The purpose of this section is to prove that the Join Property and the
Pseudojump Inversion Property hold for all $\Pi^0_1$ subclasses of
$\oiNN$ which are Turing degree isomorphic to $\CPA$.  We also obtain
some more general-looking results.

We begin with the Join Property.  

\begin{thm}\label{thm:iso-join}
  Let $P\subseteq\oiNN$ be a $\Pi^0_1$ class.  If $P$ is Turing degree
  isomorphic to $\CPA$, then $P$ has the Join Property.
\end{thm}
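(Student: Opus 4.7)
The plan is to reduce this theorem directly to Lemma \ref{lem:upward-join}, using only the machinery already developed in \S3 and performing no further construction inside $P$. The key conceptual point is that the Join Property, as formulated in Definition \ref{dfn:props}(1), depends only on the set $\{\degT(X)\mid X\in P\}$: replacing any prospective witness $B\in P$ by a Turing-equivalent real leaves $B'$, $B\oplus0'$, and $B\oplus Z$ unchanged up to $\eqT$. So it is the Turing degree spectrum of $P$, not $P$ itself, that carries the burden.

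First I would unpack the hypothesis. Turing degree isomorphism with $\CPA$ yields $\{\degT(X)\mid X\in P\}=\{\degT(X)\mid X\in\CPA\}$, which by definition is precisely the set of $\PA$-degrees. In particular $P$ is nonempty, since $\CPA$ is nonempty. Next I would invoke Solovay's Lemma \ref{lem:solovay}, which asserts that the set of $\PA$-degrees is upwardly closed. Combining these two steps shows that $\{\degT(X)\mid X\in P\}$ is upwardly closed.

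Finally, since $P$ is a nonempty $\Pi^0_1$ subclass of $\oiNN$ whose Turing degree spectrum is upwardly closed, Lemma \ref{lem:upward-join} applies directly and delivers the Join Property for $P$, completing the proof.

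The main obstacle is essentially nonexistent at this point. The hard work has already been packaged into Solovay's Lemma \ref{lem:solovay} (for upward closure of the $\PA$-degrees) and into Lemma \ref{lem:upward-join} (which combines the Low Basis Theorem with the classical Join Theorem \ref{thm:join}). The hypothesis of Turing degree isomorphism is exactly what is needed to transport upward closure from $\CPA$ to $P$ at no cost. The only mildly delicate point worth mentioning explicitly is that nonemptiness of $P$ must be checked in order to apply Lemma \ref{lem:upward-join}, but this is immediate from nonemptiness of $\CPA$ together with the equality of degree spectra.
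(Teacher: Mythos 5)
Your proof is correct, but you take a slightly different route from the paper. The paper first establishes the Join Property for $\CPA$ as Theorem~\ref{thm:join-cpa} (via Lemmas~\ref{lem:solovay} and~\ref{lem:upward-join}), and then, given reals $A$ and $Z$, finds a witness $B\in\CPA$ with $A\eqT B'\eqT B\oplus 0'\eqT B\oplus Z$ and uses Turing degree isomorphism to replace $B$ by a Turing-equivalent $C\in P$, which is also a witness since the quantities $B'$, $B\oplus 0'$, $B\oplus Z$ are all degree-invariant in $B$. You instead observe that the \emph{hypothesis} of Lemma~\ref{lem:upward-join} --- upward closure of the degree spectrum --- is itself a purely spectral property, so it transfers at no cost under Turing degree isomorphism; you then apply Lemma~\ref{lem:upward-join} directly to $P$, making the intermediate stop at Theorem~\ref{thm:join-cpa} unnecessary. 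Both routes rest on the same two lemmas and are essentially one step apart; yours is marginally more economical in that it sidesteps the explicit construction of a witness in $\CPA$ and the final $\eqT$ calculation (which the paper states slightly garbled, writing $A'\eqT C'\eqT\cdots$ where $A\eqT C'\eqT\cdots$ is meant), while the paper's version makes the witness transfer explicit, which matches the structure used in the harder Pseudojump case (Theorem~\ref{thm:iso-pjinv}) where a spectral argument does not suffice. Your attention to the nonemptiness hypothesis of Lemma~\ref{lem:upward-join} is a small but correct detail; the paper handles this implicitly.
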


\begin{proof}
  Let $A$ and $Z$ be reals such that $A\geT Z\oplus0'$ and $Z\gT0$.
  By Theorem \ref{thm:join-cpa} let $B\in\CPA$ be such that $A\eqT
  B'\eqT B\oplus0'\eqT B\oplus Z$.  Since $P$ is Turing degree
  isomorphic to $\CPA$, let $C\in P$ be such that $B\eqT C$.  Then
  $A'\eqT C'\eqT C\oplus0'\eqT C\oplus Z$.  Thus $P$ has the Join
  Property, Q.E.D.
\end{proof}

\begin{rem}\label{rem:emb-join}
  More generally, for $P,Q\subseteq\NNNN$ we say that $P$ is
  \emph{Turing degree embeddable} into $Q$ if $\{\degT(X)\mid X\in
  P\}\subseteq\{\degT(X)\mid X\in Q\}$.  The proof of Theorem
  \ref{thm:iso-join} shows that if $P$ has the Join Property and is
  Turing degree embeddable into $Q$, then $Q$ has the Join Property.
\end{rem}

We now turn to Pseudojump Inversion.  Unfortunately, we cannot simply
imitate the proof of Theorem \ref{thm:iso-join}.  This is because
pseudojump operators are not invariant under Turing equivalence, i.e.,
$X\eqT Y$ typically does not imply $J_e(X)\eqT J_e(Y)$.  Consequently,
we do not know whether the Pseudojump Inversion Property is invariant
under Turing degree isomorphism of special $\Pioi$ subclasses of
$\oiNN$.  However, we shall settle some interesting special cases of
this question.  As a first step, consider the following notion, which
is a special case of Turing degree isomorphism.

\begin{dfn}[recursive homeomorphism]
  For $P,Q\subseteq\NNNN$, a \emph{recursive homeomorphism} of $P$
  onto $Q$ is a one-to-one onto mapping $\Phi:P\to Q$ such that both
  $\Phi$ and its inverse $\Phi^{-1}:Q\to P$ are the restrictions of
  partial recursive functionals to $P$ and $Q$ respectively.  We say
  that $P$ and $Q$ are \emph{recursively homeomorphic} if there exists
  a recursive homeomorphism of $P$ onto $Q$.
\end{dfn}

\begin{lem}\label{lem:homeo-pjinv}
  Let $P,Q\subseteq\NNNN$ be recursively homeomorphic.  If $P$ has the
  Pseudojump Inversion Property, then so does $Q$.
\end{lem}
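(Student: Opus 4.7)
The plan is to exploit the fact that, although pseudojump operators fail to respect Turing equivalence, they do respect recursive homeomorphism in a uniformly effective way. Fix a recursive homeomorphism $\Phi\colon P\to Q$, and let $\widetilde\Phi$ be a partial recursive functional whose restriction to $P$ equals $\Phi$. Given $A\geT0'$ and $e\in\NN$, the goal is to produce $C\in Q$ with $A\eqT J_e(C)\eqT C\oplus0'$, and the natural strategy is to pull $e$ back to an index $e'$ for which $J_{e'}(B)$ on $B\in P$ mirrors $J_e(\Phi(B))$ on $\Phi(B)\in Q$.

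First I would observe, by a routine application of the $s_{m,n}$-theorem, that there is a total recursive function $h$ such that for every oracle $X\in\NNNN$ on which $\widetilde\Phi(X)$ is total,
\[
W_{h(e)}^X=W_e^{\widetilde\Phi(X)}.
\]
Concretely, to enumerate $W_{h(e)}^X$ one enumerates $W_e$ relative to longer and longer initial segments of $\widetilde\Phi(X)$; the index $h(e)$ is computed primitive recursively from $e$ and a fixed index for $\widetilde\Phi$. Setting $e'=h(e)$ and applying the Pseudojump Inversion Property of $P$ to $A$ and $e'$, I obtain $B\in P$ with $A\eqT J_{e'}(B)\eqT B\oplus0'$.

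Finally, let $C=\Phi(B)\in Q$. Because $\Phi$ and $\Phi^{-1}$ are restrictions of partial recursive functionals, $B\eqT C$, and because $B\in P$ lies in the domain on which $\widetilde\Phi$ is total, $W_e^C=W_e^{\widetilde\Phi(B)}=W_{e'}^B$. Hence
\[
J_e(C)=C\oplus W_e^C\eqT B\oplus W_{e'}^B=J_{e'}(B)\eqT A,
\]
and $C\oplus0'\eqT B\oplus0'\eqT A$, so $C$ witnesses the Pseudojump Inversion Property for $Q$. The only conceptual step is the use of $s_{m,n}$ to translate pseudojump indices across the homeomorphism; there is no priority construction or serious obstacle to overcome, which is precisely why the stronger assumption of recursive homeomorphism — rather than mere Turing degree isomorphism — is what makes this argument go through.
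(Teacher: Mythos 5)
Your proposal is correct and follows essentially the same route as the paper's proof: you translate the index $e$ across the homeomorphism to an index $e'$ with $W_{e'}^X=W_e^{\Phi(X)}$ for $X\in P$, apply the Pseudojump Inversion Property of $P$ with that index, and push the resulting $B$ forward to $C=\Phi(B)$, using $B\eqT C$ and $W_e^C=W_{e'}^B$. The only cosmetic difference is that you make the index translation uniform via the $s_{m,n}$-theorem, whereas the paper simply fixes a single such index $i$ for the given $e$, which suffices.
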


\begin{proof}
  Assume that $P$ has the Pseudojump Inversion Property.  We shall
  prove that $Q$ has the Pseudojump Inversion Property.  Given
  $e\in\NN$ and $A\geT0'$, it will suffice to find $C\in Q$ such that
  $A\eqT J_e(C)\eqT C\oplus0'$.

  Let $\Phi:P\to Q$ be a recursive homeomorphism.  Let $i\in\NN$ be
  such that for all $X\in P$ and all $n\in\NN$ we have
  $\varphi_i^X(n)\simeq\varphi_e^{\Phi(X)}(n)$.  Then for all $X\in P$
  we have $W_i^X=\{n\in\NN\mid\varphi_i^X(n)\downarrow\}=
  \{n\in\NN\mid\varphi_e^{\Phi(X)}(n)\downarrow\}=W_e^{\Phi(X)}$.
  Since $P$ has the Pseudojump Inversion Property, let $B\in P$ such
  that $A\eqT J_i(B)\eqT B\oplus0'$.  Let $C =\Phi(B)$.  Because
  $\Phi$ is a recursive homeomorphism, we have $B\eqT\Phi(B)=C$, hence
  $J_e(C)=C\oplus W_e^C=C\oplus W_e^{\Phi(B)}=C\oplus W_i^B\eqT
  B\oplus W_i^B = J_i(B)$, hence $A\eqT J_e(C)\eqT C\oplus0'$, Q.E.D.
\end{proof}

As a bridge from Turing degree isomorphism to recursive homeomorphism,
we have the following lemma.

\begin{lem}\label{lem:bridge}
  Let $P,Q\subseteq\oiNN$ be nonempty $\Pi^0_1$ classes.  If $P$ is
  Turing degree embeddable into $Q$, then there exist nonempty
  $\Pi^0_1$ subclasses $\widetilde{P}\subseteq P$ and
  $\widetilde{Q}\subseteq Q$ such that $\widetilde{P}$ and
  $\widetilde{Q}$ are recursively homeomorphic.
\end{lem}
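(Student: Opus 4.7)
The plan is a Baire-category argument on $P$ in the spirit of Jockusch and Soare. For each pair $(i,j)\in\NN^2$ I first define the $\Pi^0_1$ compatibility class $S_{i,j}\subseteq\oiNN\times\oiNN$ consisting of the pairs $(X,Y)$ with $X\in P$, $Y\in Q$ such that every convergent partial computation of $\varphi_i^X$ agrees with $Y$ and every convergent partial computation of $\varphi_j^Y$ agrees with $X$. Because $\oiNN$ is effectively compact, each first-coordinate projection $\pi_1(S_{i,j})\subseteq P$ is again $\Pi^0_1$. The Turing degree embeddability hypothesis, applied to an arbitrary $X\in P$ together with a witness $Y\in Q$ and reduction indices $(i,j)$ for $X\eqT Y$, gives $(X,Y)\in S_{i,j}$, and hence the countable closed cover $P=\bigcup_{(i,j)\in\NN^2}\pi_1(S_{i,j})$.

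Applying the Baire Category Theorem to the compact Polish space $P$, at least one $\pi_1(S_{i_0,j_0})$ has nonempty interior in $P$, so I can fix a string $\sigma\in\oist$ with $\emptyset\ne P\cap[\sigma]\subseteq\pi_1(S_{i_0,j_0})$. From this I extract $\widetilde{P}$ by intersecting $P\cap[\sigma]$ with a suitable $\Pi^0_1$ subclass on which $\varphi_{i_0}^X$ is total, $\varphi_{i_0}^X\in Q$, and $\varphi_{j_0}^{\varphi_{i_0}^X}=X$; the existence of such a refined nonempty $\Pi^0_1$ class will follow from a recursive tree construction that enforces convergence of both reductions within the length of each initial segment, together with a compactness argument forcing a uniform recursive bound on the use of $\varphi_{i_0}$ on a nonempty closed piece of $P\cap[\sigma]$. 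I then set $\widetilde{Q}=\{\varphi_{i_0}^X\mid X\in\widetilde{P}\}$, which is $\Pi^0_1$ in $Q$ because compactness of $\widetilde{P}$ makes the use function of $\varphi_{i_0}$ recursive and $\widetilde{Q}$ an effectively compact continuous image.

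Finally I verify the homeomorphism. Define $\Phi(X)=\varphi_{i_0}^X$ on $\widetilde{P}$ and $\Psi(Y)=\varphi_{j_0}^Y$ on $\widetilde{Q}$; both are restrictions of partial recursive functionals, and the defining conditions of $\widetilde{P}$ and $\widetilde{Q}$ make them total on the respective classes. The compatibility in $S_{i_0,j_0}$, together with totality, yields $\Psi(\Phi(X))=\varphi_{j_0}^{\varphi_{i_0}^X}=X$ for $X\in\widetilde{P}$ and the symmetric identity $\Phi(\Psi(Y))=Y$ for $Y\in\widetilde{Q}$, so $\Phi$ and $\Psi$ are mutually inverse, witnessing that $\widetilde{P}$ and $\widetilde{Q}$ are recursively homeomorphic.

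The main obstacle is the passage from ``$X\in\pi_1(S_{i_0,j_0})$'' -- which only says that some $Y\in Q$ is compatible with the partial computations of $\varphi_{i_0}^X$ -- to the stronger requirement that $\varphi_{i_0}^X$ is actually total, so that $\Phi$ is well-defined and single-valued. This totality condition is $\Pi^0_2$ rather than $\Pi^0_1$, so combining the Baire step with the fast-convergence tree refinement and the effective compactness of $P\cap[\sigma]$ in a way that keeps $\widetilde{P}$ a $\Pi^0_1$ class is where the bulk of the technical work is expected to lie.
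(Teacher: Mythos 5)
There is a genuine gap, and it sits exactly where you say you expect ``the bulk of the technical work'' to lie: the passage from the Baire-category step to a nonempty $\Pioi$ class on which the reductions are total is not carried out, and the setup you chose cannot deliver it. Your compatibility classes $S_{i,j}$ are too weak to carry any reducibility information: if $i_0$ and $j_0$ are indices of nowhere-defined functionals, then \emph{every} pair $(X,Y)\in P\times Q$ lies in $S_{i_0,j_0}$, so $\pi_1(S_{i_0,j_0})=P$ and the Baire Category Theorem may (and typically will) hand you precisely such a degenerate pair. Knowing $P\cap\llb\sigma\rrb\subseteq\pi_1(S_{i_0,j_0})$ therefore does not imply, for even a single $X\in P\cap\llb\sigma\rrb$, that $\varphi_{i_0}^X$ is total, lies in $Q$, or is inverted by $\varphi_{j_0}$; the witness $Y$ in the definition of $S_{i_0,j_0}$ need have nothing to do with $\varphi_{i_0}^X$. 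Consequently the asserted ``recursive tree construction that enforces convergence of both reductions within the length of each initial segment, together with a compactness argument forcing a uniform recursive bound on the use'' has no reason to produce a \emph{nonempty} class: nonemptiness would require some $X\in P\cap\llb\sigma\rrb$ at which $\varphi_{i_0}^X$ is total with recursively bounded use and convergence time, and nothing in your argument produces such a point or such indices. (If instead you strengthen $S_{i,j}$ to demand genuine equivalence $\varphi_i^X=Y$, $\varphi_j^Y=X$, the pieces become $\Pi^0_2$ rather than closed, and the closed-cover Baire argument you invoke no longer applies as stated.)

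The missing idea is supplied in the paper by the Hyperimmune-Free Basis Theorem: choose $X_0\in P$ hyperimmune-free, take $Y_0\in Q$ with $X_0\eqT Y_0$ by the embeddability hypothesis, and observe that $Y_0$ is then also hyperimmune-free, so the Turing equivalence upgrades to truth-table reductions, i.e.\ \emph{total} functionals $\Phi,\Psi:\oiNN\to\oiNN$ with recursive use such that $\Phi(X_0)=Y_0$ and $\Psi(Y_0)=X_0$. With total functionals in hand, the conditions ``$\Phi(X)\in Q$ and $\Psi(\Phi(X))=X$'' are $\Pioi$, so $\widetilde{P}=\{X\in P\mid\Phi(X)\in Q\ \mathrm{and}\ \Psi(\Phi(X))=X\}$ is a $\Pioi$ subclass of $P$, nonempty because it contains $X_0$, and $\widetilde{Q}=\{\Phi(X)\mid X\in\widetilde{P}\}$ is a $\Pioi$ subclass of $Q$ recursively homeomorphic to $\widetilde{P}$ via $\Phi$. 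Your closing paragraph correctly identifies the $\Pi^0_2$ totality obstruction, but identifying the obstruction is not the same as overcoming it; some basis-theorem-style input (hyperimmune-freeness, or an equivalent device producing a point where the reductions have recursively bounded use) is needed, and your Baire-category framework as written does not substitute for it.
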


\begin{proof}
  We shall draw on some facts about \emph{hyperimmune-freeness} and
  \emph{truth-table reducibility}.  For this background, see
  \cite[\S\S8.3,9.6]{rogers} and \cite[\S4]{massrand}.

  By the Hyperimmune-Free Basis Theorem \cite[Theorem 2.4]{js} (see
  also \cite[Theorem 4.19]{massrand}), let $X_0\in P$ be
  hyperimmune-free.  Since $P$ is Turing degree embeddable into $Q$,
  let $Y_0\in Q$ be such that $X_0\eqT Y_0$.  Then $Y_0$ is also
  hyperimmune-free, and by hyperimmune-freeness there exist
  truth-table functionals $\Phi,\Psi:\oiNN\to\oiNN$ such that
  $\Phi(X_0)=Y_0$ and $\Psi(Y_0)=X_0$.  Let $\widetilde{P}=\{X\in
  P\mid\Phi(X)\in Q$ and $\Psi(\Phi(X))=X\}$.  Then $\widetilde{P}$ is
  a $\Pi^0_1$ subclass of $P$, and it is nonempty because it contains
  $X_0$.  Moreover $\widetilde{Q}=\{\Phi(X)\mid X\in\widetilde{P}\}$
  is also a $\Pi^0_1$ subclasss of $Q$, and we have a recursive
  homeomorphism $\Phi\res\widetilde{P}$ of $\widetilde{P}$ onto
  $\widetilde{Q}$.
\end{proof}

\begin{lem}\label{lem:cpa-tilde}
  Any nonempty $\Pioi$ subclass of $\CPA$ is recursively homeomorphic
  to $\CPA$.
\end{lem}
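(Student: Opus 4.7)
Let $P$ be a nonempty $\Pi^0_1$ subclass of $\CPA$. By Lemma \ref{lem:comp}, $P=\Comp_T$ for some consistent, recursively axiomatizable extension $T$ of $\PA$. The plan is to build a recursive homeomorphism $\Phi:P\to\CPA$ by an effective Cantor-style back-and-forth on two labeled binary trees of nonempty $\Pi^0_1$ classes, one fibering $\CPA$ and the other fibering $P$, using the G\"odel--Rosser splitting property (Lemma \ref{lem:splitting}) which applies uniformly to both since every nonempty $\Pi^0_1$ subclass of $\CPA$ corresponds to a consistent recursively axiomatizable extension of $\PA$ (again by Lemma \ref{lem:comp}).

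By induction on $\sigma\in\oist$ I would primitive recursively construct nonempty $\Pi^0_1$ classes $P_\sigma\subseteq\CPA$ and $Q_\sigma\subseteq P$ with $P_\emptyset=\CPA$, $Q_\emptyset=P$, and at each node a disjoint decomposition $P_\sigma=P_{\sigma\cat\la0\ra}\sqcup P_{\sigma\cat\la1\ra}$ into nonempty children obtained from a G\"odel--Rosser sentence for the theory $U_\sigma^P$ associated to $P_\sigma$ by Lemma \ref{lem:comp}, and symmetrically for $Q_\sigma$ using the theory $U_\sigma^Q$. To guarantee that for every path $\pi\in\oiNN$ the intersections $\bigcap_nP_{\pi\res n}$ and $\bigcap_nQ_{\pi\res n}$ are singletons, I would interleave the pure G\"odel--Rosser splittings with ``targeted'' splittings arranged so that every sentence in a fixed enumeration $\varphi_0,\varphi_1,\ldots$ of $\Sent_\PA$ eventually becomes decided in the theories associated to every node along every infinite path.

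Given such a tree, define $\Phi(Y)=X$ for $Y\in P$, where $\pi\in\oiNN$ is the unique path with $Y\in Q_{\pi\res n}$ for every $n$ and $\{X\}=\bigcap_nP_{\pi\res n}$; uniqueness of $\pi$ and of $X$ follows from the disjointness of children and the decision-enforcement along paths. From $Y$ each bit $\pi(n)$ is computed by checking whether the splitting sentence at $Q_{\pi\res n}$ belongs to $Y$, so $\pi$ is recursive in $Y$; from $\pi$ the theory $\bigcup_nU_{\pi\res n}^P$ is a recursively enumerable complete extension of $\PA$, and since every $\varphi_m$ is decided by some finite stage we can read off $X(m)$ by searching in parallel for a proof of $\varphi_m$ or of $\lnot\,\varphi_m$ from $U_{\pi\res n}^P$ for sufficiently large $n$. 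Hence $\Phi$ is recursive and, by the symmetric construction, so is $\Phi^{-1}$.

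The main difficulty will be implementing the targeted splittings primitive recursively: the question ``is $\varphi_n$ independent of $U_\sigma$?'' is only $\Sigma^0_1$, so one cannot simply branch on it. I would resolve this by using uniformly computable Rosser-style sentences designed to serve double duty: at stage $s=2n$ on the $P$-side, for each $\sigma$ of length $s$ one constructs from $(U_\sigma^P,\varphi_n)$ a sentence whose two sides always yield a consistent disjoint split and which (possibly across one or two additional subsidiary levels) forces $\varphi_n$ to become provable or refutable in each resulting subtheory, using the fact that once $\varphi_n$ is decided by some $U_\sigma^P$ it is automatically decided in every extension. Alternating this with the symmetric construction on the $Q$-side secures completeness of the theories along every path without ever needing to answer a $\Sigma^0_1$ question recursively.
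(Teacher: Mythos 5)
The step you yourself flag as the main difficulty is not merely difficult; it is impossible, and in fact the overall architecture you are aiming for is contradictory. Suppose your construction succeeded: a recursive assignment $\sigma\mapsto P_\sigma$ of nonempty $\Pioi$ subclasses of $\CPA$ to all $\sigma\in\oist$, with $P_{\sigma\cat\la0\ra}$ and $P_{\sigma\cat\la1\ra}$ disjoint, nonempty, and covering $P_\sigma$, and with every sentence eventually decided by the theories $U^P_{\pi\res n}$ along every path $\pi\in\oiNN$. Then the path map would be a recursive homeomorphism of $\CPA$ onto \emph{all} of $\oiNN$: from $X\in\CPA$ one computes its path by testing the splitting sentences, and from a path $\pi$ one computes the unique $X\in\bigcap_nP_{\pi\res n}$ by searching, for each sentence $\varphi_m$, for some $n$ and a proof of $\varphi_m$ or of $\lnot\,\varphi_m$ from $U^P_{\pi\res n}$ (your decision-enforcement guarantees the search halts, and nestedness plus consistency of the nodes guarantees the answer agrees with $X$). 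Applying this to the recursive path $\la0,0,0,\ldots\ra$ yields a recursive member of $\CPA$, i.e., a recursive complete consistent extension of $\PA$, which is impossible. So no interleaving scheme whatsoever can reconcile ``all nodes nonempty'' with ``every sentence decided along every path''; the same remark applies to your $Q$-side tree, since $P\subseteq\CPA$ has no recursive members either.

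The same obstruction already kills the local gadget. There is no uniform procedure which, given (an index for) a consistent r.e.\ $U\supseteq\PA$ and a sentence $\varphi$, outputs finitely many consistent extensions of $U$ whose completion classes partition $\Comp_U$ and each of which decides $\varphi$: if there were, then ``$\varphi$ is independent of $U$'' would be $\Sigma^0_1$ (some leaf proves $\varphi$ and some other leaf proves $\lnot\,\varphi$), while ``$U$ decides $\varphi$'' is $\Sigma^0_1$ by proof search; these are complementary, so both would be recursive, and then the theorems of $U$ would form a recursive set, contradicting essential undecidability. Hence your ``double duty'' Rosser sentences, even allowing one or two subsidiary levels, cannot exist, and the failure cannot be repaired by cleverer bookkeeping: by the previous paragraph, the decision of $\varphi_n$ cannot be forced even path-by-path inside an everywhere-consistent full binary splitting tree. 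Note that the paper gives no construction here at all -- its proof of Lemma \ref{lem:cpa-tilde} is a citation to \cite[\S3]{pizowkl}. Any correct argument (for instance one in the tradition of Pour-El and Kripke's deduction-preserving recursive isomorphisms) must abandon the requirement that every cell of the back-and-forth remain consistent, and instead exploits the effective (Rosser-style) inseparability of the provable and refutable sentences so that the inconsistent cells arising on the two sides are made to correspond; your proposal would have to be rebuilt along such lines rather than refined.
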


\begin{proof}
  See \cite[\S3]{pizowkl}.
\end{proof}

We can now prove the following theorem, which is parallel to Theorem
\ref{thm:iso-join}.

\begin{thm}\label{thm:iso-pjinv}
  Let $P\subseteq\oiNN$ be a $\Pi^0_1$ class.  If $P$ is Turing degree
  isomorphic to $\CPA$, then $P$ has the Pseudojump Inversion
  Property.
\end{thm}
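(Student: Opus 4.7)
The plan is to chain together the three preparatory lemmas of this section together with Theorem \ref{thm:pjinv-cpa} to transfer the Pseudojump Inversion Property from $\CPA$ down to $P$. Concretely, I would exploit the fact that Turing degree isomorphism between $P$ and $\CPA$ implies, in particular, that $P$ is Turing degree embeddable into $\CPA$ (in the sense of Remark \ref{rem:emb-join}), so Lemma \ref{lem:bridge} is directly applicable with $Q=\CPA$.

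The steps would be as follows. First, apply Lemma \ref{lem:bridge} to obtain nonempty $\Pi^0_1$ classes $\widetilde{P}\subseteq P$ and $\widetilde{Q}\subseteq\CPA$ together with a recursive homeomorphism $\widetilde{P}\to\widetilde{Q}$. Second, apply Lemma \ref{lem:cpa-tilde} to the nonempty $\Pi^0_1$ subclass $\widetilde{Q}$ of $\CPA$ to obtain a recursive homeomorphism $\widetilde{Q}\to\CPA$; composing this with the homeomorphism from the previous step yields a recursive homeomorphism $\widetilde{P}\to\CPA$, using the routine fact that compositions of recursive homeomorphisms are recursive homeomorphisms. Third, combine Theorem \ref{thm:pjinv-cpa} (which gives the Pseudojump Inversion Property for $\CPA$) with Lemma \ref{lem:homeo-pjinv} to conclude that $\widetilde{P}$ has the Pseudojump Inversion Property. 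Finally, since $\widetilde{P}\subseteq P$, any witness $B\in\widetilde{P}$ for a given $A\geT 0'$ and $e\in\NN$ is automatically a witness in $P$, so $P$ itself has the Pseudojump Inversion Property.

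There is no real obstacle here once the machinery of this section is in place; the entire content is in Lemmas \ref{lem:bridge}, \ref{lem:cpa-tilde}, and \ref{lem:homeo-pjinv}, and the argument is essentially bookkeeping. If anything needs care, it is the observation that passing from $P$ to a $\Pi^0_1$ subclass $\widetilde{P}$ preserves the Pseudojump Inversion Property upward (which is immediate, in contrast to the Join Property where one must also worry about $B\oplus Z$), and the observation that Turing degree isomorphism gives embeddability in the direction needed to apply Lemma \ref{lem:bridge}.
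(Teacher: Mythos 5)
Your proposal is correct and follows essentially the same route as the paper's proof: apply Lemma \ref{lem:bridge} to obtain recursively homeomorphic $\widetilde{P}\subseteq P$ and $\widetilde{Q}\subseteq\CPA$, use Lemma \ref{lem:cpa-tilde} to relate $\widetilde{Q}$ back to $\CPA$, transfer the Pseudojump Inversion Property via Theorem \ref{thm:pjinv-cpa} and Lemma \ref{lem:homeo-pjinv}, and conclude by $\widetilde{P}\subseteq P$. The only cosmetic difference is that you compose the two recursive homeomorphisms before invoking Lemma \ref{lem:homeo-pjinv} once, whereas the paper invokes it twice in sequence.
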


\begin{proof}
  Let $P$ be Turing degree isomorphic to $\CPA$.  By Lemma
  \ref{lem:bridge} there are nonempty recursively homeomorphic
  $\Pi^0_1$ classes $\widetilde{P}\subseteq P$ and
  $\widetilde{\CPA}\subseteq\CPA$.  By Lemma \ref{lem:cpa-tilde}
  $\widetilde{\CPA}$ is recursively homeomorphic to $\CPA$.  By
  Theorem \ref{thm:pjinv-cpa} $\CPA$ has the Pseudojump Inversion
  Property, so by Lemma \ref{lem:homeo-pjinv} $\widetilde{\CPA}$ and
  hence $\widetilde{P}$ have the Pseudojump Inversion Property.  But
  then, since $\widetilde{P}\subseteq P$, it follows that $P$ has the
  Pseudojump Inversion Property, Q.E.D.
\end{proof}

Next we prove a more general-looking result.  Recall from
\cite{massrand,extre,massaed,massmtr,dou-tutorial} that a nonempty
$\Pioi$ class $P\subseteq\oiNN$ is said to be \emph{Muchnik complete}
if every nonempty $\Pioi$ class $Q\subseteq\oiNN$ is \emph{Muchnik
  reducible} to $P$, i.e., for all $X\in P$ there exists $Y\in Q$ such
that $Y\leT X$.

\begin{lem}\label{lem:Muchnik}
  A $\Pioi$ class $P\subseteq\oiNN$ is Muchnik complete if and only if
  it is Turing degree isomorphic to $\CPA$.  Moreover, for such a $P$
  the set of Turing degrees $\{\deg_T(X)\mid X\in P\}$ is upwardly
  closed.
\end{lem}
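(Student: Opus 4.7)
The plan is to establish both implications of the biconditional, after which the upward-closure clause follows immediately from Solovay's Lemma \ref{lem:solovay}.

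For the easier direction, assume $P$ is Turing degree isomorphic to $\CPA$. Then every $X\in P$ has PA-degree, and since every PA-degree computes a member of each nonempty $\Pioi$ class (the classical fact that $\CPA$ sits at the top of the Muchnik lattice of nonempty $\Pioi$ subclasses of $\oiNN$), we have, for every nonempty $\Pioi$ class $Q$ and every $X\in P$, some $Y\in Q$ with $Y\leT X$.  Hence $P$ is Muchnik complete.

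For the other direction, assume $P$ is Muchnik complete.  Applying Muchnik reducibility with $Q=\CPA$ yields, for every $X\in P$, some $Y\in\CPA$ with $Y\leT X$, so $\degT(X)$ lies above a PA-degree; by Lemma \ref{lem:solovay} $\degT(X)$ itself is a PA-degree.  This gives $\{\degT(X)\mid X\in P\}\subseteq\{\degT(Y)\mid Y\in\CPA\}$, i.e., $P$ is Turing degree embeddable into $\CPA$.  The main obstacle is the reverse inclusion: we must realize every PA-degree inside $P$ itself, and Muchnik completeness does not obviously produce members of $P$ of a prescribed degree.  To overcome this I would invoke the machinery of \S4: Lemma \ref{lem:bridge} produces nonempty $\Pioi$ subclasses $\widetilde{P}\subseteq P$ and $\widetilde{\CPA}\subseteq\CPA$ that are recursively homeomorphic, and Lemma \ref{lem:cpa-tilde} shows $\widetilde{\CPA}$ is recursively homeomorphic to $\CPA$.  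Composing these homeomorphisms makes $\widetilde{P}$ recursively homeomorphic, and hence Turing degree isomorphic, to $\CPA$, so $\widetilde{P}$, and therefore $P$, contains a member of every PA-degree.  Combined with the previous inclusion, $P$ is Turing degree isomorphic to $\CPA$.

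Finally, for any such $P$ the set $\{\degT(X)\mid X\in P\}$ equals the set of PA-degrees, which is upward closed by Lemma \ref{lem:solovay}, giving the moreover clause.
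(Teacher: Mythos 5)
Your argument is correct, but it is a genuinely different route from the paper's, because the paper does not prove this lemma at all: it simply cites \cite[\S\S3,6]{massrand}. You instead derive it from machinery already present in the paper. Your forward direction (Turing degree isomorphic to $\CPA$ implies Muchnik complete) rests on the classical Scott basis theorem that every $\PA$-degree computes a member of every nonempty $\Pioi$ subclass of $\oiNN$; note that this is exactly the $P=\CPA$ instance of the implication being proved, so it is the one external ingredient you still must cite (it is part of what \cite[\S3]{massrand} supplies), though it is of course standard and there is no vicious circularity. Your reverse direction is the nicer contribution: from Muchnik completeness you get, for each $X\in P$, some $Y\in\CPA$ with $Y\leT X$, whence $\degT(X)$ is a $\PA$-degree by Lemma \ref{lem:solovay}; then, to realize every $\PA$-degree inside $P$, you apply Lemma \ref{lem:bridge} to the resulting Turing degree embedding of $P$ into $\CPA$, obtaining recursively homeomorphic $\widetilde{P}\subseteq P$ and $\widetilde{\CPA}\subseteq\CPA$, and compose with Lemma \ref{lem:cpa-tilde} to make $\widetilde{P}$ recursively homeomorphic (hence Turing degree isomorphic, since a recursive homeomorphism and its inverse give $X\eqT\Phi(X)$) to $\CPA$. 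This is the same composition trick the paper uses in Theorem \ref{thm:iso-pjinv}, redeployed here. What the comparison buys: the paper's citation is shorter and defers to \cite{massrand} (where the analogous result is \cite[Theorem 6.8]{massrand}, as the Remark in \S1 notes), while your route makes the lemma essentially self-contained given \S\S3--4 of the paper plus the Scott basis theorem and Solovay's upward closure. The ``moreover'' clause is handled correctly: once the degree spectrum of $P$ equals the set of $\PA$-degrees, upward closure is exactly Lemma \ref{lem:solovay}.
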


\begin{proof}
  See \cite[\S\S3,6]{massrand}.
\end{proof}

\begin{thm}
  Let $P\subseteq\oiNN$ be a nonempty $\Pioi$ class.  If $P$ is
  Muchnik complete, then $P$ has the Join Property and the Pseudojump
  Inversion Property.
\end{thm}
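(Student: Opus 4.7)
The plan is to observe that this theorem is essentially a direct corollary of results already established in this section, so the main work is just to assemble them in the right order. The key insight is that Muchnik completeness, despite being defined in terms of reducibility of arbitrary nonempty $\Pioi$ classes, has already been identified with Turing degree isomorphism to $\CPA$ via Lemma \ref{lem:Muchnik}.

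First, I would invoke Lemma \ref{lem:Muchnik} to conclude that, since $P$ is Muchnik complete, $P$ is Turing degree isomorphic to $\CPA$. Next, I would apply Theorem \ref{thm:iso-join} to the class $P$, yielding that $P$ has the Join Property. Finally, I would apply Theorem \ref{thm:iso-pjinv} to $P$, yielding that $P$ has the Pseudojump Inversion Property. Together these establish both conclusions of the theorem.

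There is no real obstacle here; the theorem is stated in the more general-looking Muchnik-completeness language, but the equivalence recorded in Lemma \ref{lem:Muchnik} collapses it back to the Turing-degree-isomorphism hypothesis already handled in Theorems \ref{thm:iso-join} and \ref{thm:iso-pjinv}. The only small stylistic choice is whether to cite Lemma \ref{lem:Muchnik} explicitly or simply remark that Muchnik completeness implies Turing degree isomorphism to $\CPA$; I would make the citation explicit so that the reader sees that the new content of this theorem lies in the earlier lemmas rather than in any additional construction.
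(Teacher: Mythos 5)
Your proposal is correct and matches the paper's proof exactly: cite Lemma \ref{lem:Muchnik} to convert Muchnik completeness into Turing degree isomorphism with $\CPA$, then apply Theorems \ref{thm:iso-join} and \ref{thm:iso-pjinv}.
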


\begin{proof}
  This is immediate from Lemma \ref{lem:Muchnik} and Theorems
  \ref{thm:iso-join} and \ref{thm:iso-pjinv}.
\end{proof}

An even more general-looking result reads as follows.

\begin{thm}
  Let $P\subseteq\oiNN$ be a nonempty $\Pioi$ class such that
  $\{\deg_T(Y)\mid Y\in P\}$ is upwardly closed.  Then any
  $P_1\subseteq\NNNN$ which includes $P$ has the Join Property and the
  Pseudojump Inversion Property.
\end{thm}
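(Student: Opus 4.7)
The plan is to reduce both properties for $P_1$ to the analogous properties for $P$ itself: any witness $B\in P$ lies in $P_1\supseteq P$ and serves there as well. The Join Property for $P$ is exactly Lemma \ref{lem:upward-join}.

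For the Pseudojump Inversion Property of $P$, the plan is to adapt the proof of Theorem \ref{thm:pjinv-cpa}, with $P$ in place of $\CPA$. Two modifications are needed. First, we need a splitting function for $P$ analogous to Lemma \ref{lem:splitting}: since $P$ is special, every nonempty $\Pi^0_1$ subclass of $P$ is infinite (a finite nonempty $\Pi^0_1$ subset of $\oiNN$ would consist only of recursive points), and so bifurcates somewhere; using $0'$ as an oracle we can therefore find, uniformly in an index of such a subclass, a string over which it splits into two disjoint nonempty $\Pi^0_1$ subclasses, yielding a $0'$-recursive splitting function $s$ for $P$. Second, because this $s$ is $0'$-recursive rather than primitive recursive, the reduction $f\leT J_e(B)$ from the proof of Theorem \ref{thm:pjinv-cpa} only yields $f\leT J_e(B)\oplus 0'$ a priori, giving $A\eqT J_e(B)\oplus 0'$ instead of $A\eqT J_e(B)$. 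To absorb the $0'$-overhead, I plan to interleave into the coding stages of the construction an additional split encoding $0'(n)$ at a designated position of $B$; the $0'$-oracle supplies the correct branch, so that after infinitely many such stages $B$ itself computes $0'$ (by reading the designated positions off $B$), and hence $J_e(B)\geT B\geT 0'$.

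The resulting descending sequence $Q_0=P\supseteq Q_1\supseteq\cdots$ of nonempty $\Pi^0_1$ classes and index function $f$ yield $B\in\bigcap_j Q_j\subseteq P$ satisfying, as in the proof of Theorem \ref{thm:pjinv-cpa}, the relations $f\eqT A\eqT B\oplus 0'$ and $n\in J_e(B)\liff f(2n+1)=f(2n)$, from which $A\eqT J_e(B)\eqT B\oplus 0'$ follows. The main obstacle is the bookkeeping for the combined splits — simultaneously coding $A(n)$ and $0'(n)$ at the coding stages while also handling the pseudojump diagonalization at the odd stages — but this is a matter of re-indexing and combining splitting calls, and does not require any idea beyond those in the proof of Theorem \ref{thm:pjinv-cpa} together with the upward closure hypothesis (used implicitly to guarantee that the refined $\Pi^0_1$ classes at each stage remain nonempty).
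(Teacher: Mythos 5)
Your treatment of the Join Property is fine: Lemma \ref{lem:upward-join} applies verbatim to $P$, and any witness $B\in P\subseteq P_1$ witnesses the property for $P_1$ as well.

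The Pseudojump Inversion half has a genuine gap. A first warning sign is that your construction never uses the upward-closure hypothesis in any essential way (nonemptiness at each stage comes from the choice of splitting position and from the dichotomy at the odd stages, exactly as in Theorem \ref{thm:pjinv-cpa}, not from upward closure), so if the argument worked it would show that \emph{every} special $\Pioi$ subclass of $\oiNN$ has the Pseudojump Inversion Property, contradicting Theorem \ref{thm:Q-nonprops}. (A lesser issue: the hypothesis does not make $P$ special -- e.g.\ $P=\oiNN$ has upwardly closed degree spectrum -- so ``since $P$ is special'' is itself unjustified.) The concrete failing step is the claim that ``$B$ itself computes $0'$ by reading the designated positions off $B$.'' Those designated positions (equivalently, the indices of the two disjoint subclasses used at each coding stage) are produced by your $0'$-recursive splitting function, whose oracle use is not under your control; so to locate the positions the decoder already needs $0'$ (or the index function $f$), which is precisely what it is trying to compute. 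This circularity is not bookkeeping: for the class $Q$ of Theorem \ref{thm:Q}, every member satisfies $X'\eqT X\oplus0'$ (part 4 of Theorem \ref{thm:Q-props}), so no member of $Q$ computes $0'$ at all, and hence no coding scheme of this kind can succeed inside a general special class. Upward closure must enter differently: the paper uses Lemma \ref{lem:Muchnik} (Muchnik completeness of $\CPA$) plus upward closure to embed the $\CPA$-degrees into the spectrum of $P$, and -- since pseudojump operators are not Turing-degree invariant -- upgrades this embedding via Lemma \ref{lem:bridge} to a recursive homeomorphism between nonempty $\Pioi$ subclasses $\widetilde{\CPA}\subseteq\CPA$ and $\widetilde{P}\subseteq P$; then Lemma \ref{lem:cpa-tilde}, Theorem \ref{thm:pjinv-cpa}, and Lemma \ref{lem:homeo-pjinv} transfer the Pseudojump Inversion Property to $\widetilde{P}$, hence to $P_1\supseteq P\supseteq\widetilde{P}$. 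Some device of this kind, exploiting the hypothesis, is what your proposal is missing.
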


\begin{proof}
  By Lemma \ref{lem:Muchnik} $\CPA$ is Turing degree embeddable into
  $P$.  Our result then follows by Remark \ref{rem:emb-join} and
  Theorems \ref{thm:iso-join} and \ref{thm:iso-pjinv}.
\end{proof}

\section{$\Pi^0_1$ classes of positive measure}

For $\sigma\in\oist$ we write
$\llb\sigma\rrb=\{X\in\oiNN\mid\sigma\subset X\}$.  Let $\mu$ be the
\emph{fair coin measure} on $\oiNN$, defined by letting
$\mu(\llb\sigma\rrb)=2^{-|\sigma|}$ for all $\sigma\in\oist$.  This
$\mu$ is a Borel probability measure on $\oiNN$.  A $\Pioi$ class
$P\subseteq\oiNN$ is said to be \emph{of positive measure} if
$\mu(P)>0$.  In this section we note that such a $P$ must have the
Pseudojump Inversion Property but need not have the Join Property.  We
also obtain the same results for $\Pi^0_1$ subclasses of $\oiNN$ which
are Turing degree isomorphic to such a $P$.

To prove these results we need some basic facts about
\emph{Martin-L\"of randomness} and \emph{$\LR$-reducibility}.  We cite
our semi-expository papers \cite{massrand,aedsh,massaed,massmtr} but
one can also consult the treatises of Downey and Hirschfeldt
\cite{do-hi-book} and Nies \cite{nies-book}.  Let
$\MLR=\{X\in\oiNN\mid X$ is Martin-L\"of random$\}$.

\begin{lem}\label{lem:pos}
  A $\Pi^0_1$ subclass of $\oiNN$ is of positive measure if and only
  if it includes a nonempty $\Pi^0_1$ subclass of $\MLR$.
\end{lem}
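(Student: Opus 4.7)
The plan is to fix a universal Martin-L\"of test $(U_n)_{n\in\NN}$, i.e., a uniformly $\Sigma^0_1$ sequence in $\oiNN$ with $\mu(U_n)\le 2^{-n}$ and $\oiNN\setminus\MLR=\bigcap_nU_n$.  The one fact I will exploit is that $\oiNN\setminus U_n\subseteq\MLR$ for every $n$, since $X\notin U_n$ already witnesses $X\notin\bigcap_mU_m$.  Given this, the direction $(\Rightarrow)$ is immediate: for a $\Pi^0_1$ class $P$ with $\mu(P)>0$, choose $n$ with $2^{-n}<\mu(P)$ and set $Q=P\cap(\oiNN\setminus U_n)$.  Then $Q$ is the intersection of two $\Pi^0_1$ classes, hence $\Pi^0_1$; its measure is at least $\mu(P)-2^{-n}>0$, so $Q$ is nonempty; and $Q\subseteq\oiNN\setminus U_n\subseteq\MLR$.

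For the direction $(\Leftarrow)$, suppose $P$ includes a nonempty $\Pi^0_1$ class $Q\subseteq\MLR$; it suffices to prove $\mu(Q)>0$, since then $\mu(P)\ge\mu(Q)>0$.  The substantive step is the classical fact that a $\Pi^0_1$ class of measure zero must be disjoint from $\MLR$.  To see this, present $Q$ as an intersection $\bigcap_sC_s$ of a recursive decreasing sequence of clopen sets $C_s$, obtained by removing at stage $s$ the first $s$ basic cylinders enumerated into the $\Sigma^0_1$ complement of $Q$; each $\mu(C_s)$ is then a computable rational, and $\mu(C_s)$ decreases to $\mu(Q)$.  If $\mu(Q)=0$, one can recursively locate the least $s_n$ with $\mu(C_{s_n})<2^{-n}$, and then $V_n=C_{s_n}$ defines a Martin-L\"of test with $Q\subseteq\bigcap_nV_n$, so no element of $Q$ is Martin-L\"of random, contradicting $Q\subseteq\MLR$.

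The argument is entirely classical and presents no real obstacle; the only delicate point is the uniform computability of $\mu(C_s)$ and the recursive choice of the thresholds $s_n$, both of which are routine because each $C_s$ is clopen with rational measure.  Indeed, this lemma appears in standard form in both \cite{do-hi-book} and \cite{nies-book}, cited by the paper.
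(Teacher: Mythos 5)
Your proof is correct and follows essentially the same route as the paper, whose one-line proof simply cites the standard facts that $\MLR$ is a $\Sigma^0_2$ class of full measure (equivalently, the union of the $\Pi^0_1$ complements of the components of a universal Martin-L\"of test) together with the companion fact that a measure-zero $\Pi^0_1$ class contains no Martin-L\"of random reals. You have merely written out in full the two standard arguments behind that citation, and both directions check out.
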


\begin{proof}
  This is because $\MLR\subseteq\oiNN$ is a $\Sigma^0_2$ class of
  full measure.  See for instance \cite[\S8]{massrand}.
\end{proof}

\begin{lem}[due to Nies \protect{\cite{nies-book}}] \label{lem:nies}
  Let $P\subseteq\oiNN$ be a $\Pi^0_1$ class of positive measure.
  Then $P$ has the Pseudojump Inversion Property.
\end{lem}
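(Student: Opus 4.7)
My plan is to mimic the construction underlying Theorem \ref{thm:pjinv-cpa} --- which established the Pseudojump Inversion Property for $\CPA$ by working inside a nested sequence of $\Pioi$ subclasses of $\CPA$ --- replacing the ``nonempty'' condition on each subclass with the ``positive measure'' condition, and the $\CPA$-specific splitting lemma (which relied on the G\"odel--Rosser theorem) with a measure-theoretic splitting argument.

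By Lemma \ref{lem:pos}, after shrinking $P$ if necessary I may assume $P\subseteq\MLR$. Fix $e\in\NN$ and $A\geT 0'$. I shall construct a nested sequence of $\Pioi$ classes $P=Q_0\supseteq Q_1\supseteq\cdots$ of strictly positive measure, together with an indexing function $f$ satisfying $Q_j=P_{f(j)}$, and take $B$ to be the unique element of $\bigcap_j Q_j$. At each odd stage $2n+1$ I use $A$, which computes $0'$, to decide whether the pseudojump operator $J_e$ can be forced to put $n$ into $W_e^B$ on a positive-measure subclass of $Q_{2n}$: if so, I set $Q_{2n+1}$ to be such a subclass; if not, I take $Q_{2n+1}=Q_{2n}\cap\{X\mid n\notin W_e^X\}$, which retains positive measure. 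At each even stage $2n+2$, I split $Q_{2n+1}$ into two positive-measure $\Pioi$ subclasses --- possible because $Q_{2n+1}$ has positive measure and therefore cannot reduce to a single point --- and choose between them based on $A(n)$, thereby coding $A$ bit by bit. Shrinking the split pieces to force convergence of prefixes guarantees that $\bigcap_j Q_j=\{B\}$ and $B\in P$.

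The verification that $A\eqT J_e(B)\eqT B\oplus 0'$ proceeds exactly as in Theorem \ref{thm:pjinv-cpa}: the index sequence $\la f(j)\mid j\in\NN\ra$ is computable from each of $A$, $J_e(B)\oplus 0'$, and $B\oplus 0'$; and conversely, $A$, $J_e(B)$, and $B\oplus 0'$ are each computable from $f$, using that $f(2n+2)$ encodes $A(n)$ and that $f(2n+1)$ distinguishes the two odd-stage cases. The main obstacle lies in the odd-stage decision procedure: positivity of measure for a $\Pioi$ class is only $\Sigma^0_2$, which is in general too complex to decide from $0'$. To overcome this, I must work with quantitative lower bounds on $\mu(Q_j)$, maintained throughout the construction, and exploit the fact that $Q_{2n}\subseteq\MLR$ so that the measure of $Q_{2n}\cap U$ for $\Sioi$ sets $U$ is approximable from below uniformly in $0'$. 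This reduces each odd-stage decision to a $\Sioi(0')$ question, which is answered by $A$. The quantitative randomness machinery needed is precisely what Nies develops in \cite{nies-book}, so the result may alternatively be quoted directly from there.
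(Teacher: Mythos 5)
Your overall plan---nested positive-measure $\Pioi$ subclasses of $P\cap\MLR$, quantitative lower bounds on $\mu(Q_j)$, threshold questions answered by $0'$ at odd stages, a clopen restriction when forcing $n\in W_e^B$ (note that $Q_{2n}\cap\{X\mid n\in W_e^X\}$ itself is not $\Pioi$), and bit-by-bit coding of $A$ at even stages---is the right general shape, and your closing fallback in fact coincides with what the paper does: the paper gives no proof but cites \cite[Theorem 6.3.9]{nies-book} and \cite[Theorem 5.1]{aedsh}. But as a proof your sketch has a genuine gap, located exactly at the hard direction $A\leT J_e(B)$. Your verification paragraph claims the index sequence $f$ is computable from $A$, from $J_e(B)\oplus0'$, and from $B\oplus0'$, and conversely that $A$, $J_e(B)$, $B\oplus0'$ are computable from $f$. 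From these facts one gets $A\eqT B\oplus0'\eqT f$ and $J_e(B)\leT A$, but one does \emph{not} get $0'\leT J_e(B)$, hence not $A\eqT J_e(B)$. The whole content of the Pseudojump Inversion Property is that $A$ be recoverable from $B\oplus W_e^B$ \emph{without} a free $0'$ oracle, and nothing in your construction secures this.

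Compare with the $\CPA$ argument you are imitating, where $f\leT J_e(B)$ outright for two reasons: in the nontrivial odd-stage case the \emph{whole} class already forces $n\in J_e(X)$, so no witness has to be chosen; and the even-stage split is given by the primitive recursive G\"odel--Rosser splitting function $s$, so both candidate subclasses are computable from the previous index and $B$ identifies its piece via a $\Sigma^0_1(B)$ event. Neither feature survives the measure-theoretic replacement: when you force $n\in W_e^B$ you must select a clopen witness, and your canonical selection is a $0'$-search; worse, at even stages the two positive-measure pieces are themselves located only with $0'$ (there is no computable analogue of $s(i,k)$ for positive-measure splitting), so $B\oplus W_e^B$ has no stated way to know which pair of pieces was offered and hence cannot read off $A(n)$. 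Maintaining measure bounds and observing that ``$\mu(\cdot)\ge q$'' is $\Pi^0_1$, hence $0'$-decidable, solves only the construction side ($f\leT A$ and $f\leT B\oplus0'$); the decoding side is where Nies's theorem actually lives, and it requires arranging every choice so that it can be replicated from $B\oplus W_e^B$ alone, e.g.\ Ku\v{c}era-style, via searches guaranteed to terminate exactly when the corresponding $\Sigma^0_1$ event in $B$ or $W_e^B$ has occurred. Until that mechanism is supplied, your argument proves only $A\eqT B\oplus0'$ with $J_e(B)\leT A$, and the lemma itself still rests on the citation.
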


\begin{proof}
  See \cite[Theorem 6.3.9]{nies-book} or a simpler proof in
  \cite[Theorem 5.1]{aedsh}.
\end{proof}

\begin{thm}
  Let $P\subseteq\oiNN$ be a $\Pi^0_1$ class of positive measure,
  and let $Q\subseteq\oiNN$ be a $\Pi^0_1$ class which is Turing
  degree isomorphic to $P$.  Then $Q$ has the Pseudojump Inversion
  Property.
\end{thm}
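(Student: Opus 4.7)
The plan is to follow the template of the proof of Theorem \ref{thm:iso-pjinv}. Since $P$ has positive measure, by Lemma \ref{lem:pos} we may fix a nonempty $\Pi^0_1$ class $P_0 \subseteq P$ with $P_0 \subseteq \MLR$. Because $P_0 \subseteq P$ and $P$ is Turing degree isomorphic to $Q$, $P_0$ is Turing degree embeddable into $Q$, so Lemma \ref{lem:bridge} applied to $P_0$ and $Q$ yields nonempty $\Pi^0_1$ subclasses $\widetilde{P_0} \subseteq P_0$ and $\widetilde{Q} \subseteq Q$ that are recursively homeomorphic.

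The crucial observation is that $\widetilde{P_0}$, being a nonempty $\Pi^0_1$ class contained in $\MLR$, itself has positive measure by the backward direction of Lemma \ref{lem:pos} (taking $\widetilde{P_0}$ itself as the nonempty $\Pi^0_1$ subclass of $\MLR$ that it includes). Therefore by Lemma \ref{lem:nies}, $\widetilde{P_0}$ has the Pseudojump Inversion Property. We then invoke Lemma \ref{lem:homeo-pjinv} to transfer this property from $\widetilde{P_0}$ to the recursively homeomorphic $\widetilde{Q}$, and since $\widetilde{Q} \subseteq Q$, any witness $B \in \widetilde{Q}$ also lies in $Q$, so $Q$ inherits the Pseudojump Inversion Property.

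There is no serious technical obstacle in this plan. The step that plays the role of Lemma \ref{lem:cpa-tilde} in the proof of Theorem \ref{thm:iso-pjinv} is the automatic passage from ``nonempty $\Pi^0_1$ subclass of $\MLR$'' to ``positive measure'' via Lemma \ref{lem:pos}, which reflects the pleasant behavior of $\MLR$ under the operation of taking nonempty $\Pi^0_1$ subclasses.
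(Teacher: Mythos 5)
Your proof is correct and follows essentially the same route as the paper: pass to a nonempty $\Pi^0_1$ subclass $P_0$ of $P\cap\MLR$ via Lemma \ref{lem:pos}, apply Lemma \ref{lem:bridge} to obtain recursively homeomorphic $\widetilde{P}\subseteq P_0$ and $\widetilde{Q}\subseteq Q$, observe via Lemma \ref{lem:pos} again that $\widetilde{P}$ has positive measure, invoke Lemma \ref{lem:nies}, and transfer through Lemma \ref{lem:homeo-pjinv} to $\widetilde{Q}$ and hence $Q$. The only cosmetic difference is that the paper writes $\widetilde{P}$ where you write $\widetilde{P_0}$.
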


\begin{proof}
  By Lemma \ref{lem:pos} let $P_0$ be a nonempty $\Pi^0_1$ subclass of
  $P\cap\MLR$.  Then $P_0$ is Turing degree embeddable in $Q$, so by
  Lemma \ref{lem:bridge} we can find nonempty $\Pi^0_1$ classes
  $\widetilde{P}\subseteq P_0$ and $\widetilde{Q}\subseteq Q$ which
  are recursively homeomorphic.  By Lemma \ref{lem:pos}
  $\widetilde{P}$ is of positive measure, so by Lemma \ref{lem:nies}
  $\widetilde{P}$ has the Pseudojump Inversion Property.  It then
  follows by Lemma \ref{lem:homeo-pjinv} that $\widetilde{Q}$ and
  hence $Q$ have the Pseudojump Inversion Property.
\end{proof}

Relativizing the notion of Martin-L\"of randomness, for any real $Y$
we write $\MLR^Y=\{X\in\oiNN\mid X$ is Martin-L\"of random relative
to $Y\}$.  If $\MLR^Y\subseteq\MLR^Z$ we say that $Z$ \emph{is
  $\LR$-reducible to} $Y$, abbreviated as $Z\le_\LR Y$.  Intuitively
this means that $Y$ has at least as much ``derandomizing power'' as
$Z$.  Clearly $Z\leT Y$ implies $Z\le_\LR Y$, but the converse
does not hold:

\begin{lem}\label{lem:z}
  There exists a nonrecursive real $Z$ such that $Z\le_\LR0$.
\end{lem}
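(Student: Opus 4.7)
The plan is to exhibit a nonrecursive $Z$ that is \emph{low for Martin-L\"of randomness}, i.e., $\MLR \subseteq \MLR^Z$. By definition of $\le_\LR$ (applied with $Y = 0$, noting $\MLR^0 = \MLR$), this is precisely the statement that $Z \le_\LR 0$. The existence of such a nonrecursive $Z$ is a well-known consequence of the theory of K-trivial reals, and I would prove the lemma as a citation plus a brief assembly of two known results.

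The first step is to invoke the existence of a nonrecursive K-trivial real $Z$, i.e., a real satisfying $K(Z\res n) \le K(n) + O(1)$ for all $n\in\NN$. The original construction is due to Solovay; a cleaner construction producing a nonrecursive c.e.\ K-trivial set was given by Downey, Hirschfeldt, Nies, and Stephan. For a self-contained presentation we can cite \cite{nies-book} or \cite{do-hi-book}. Either construction supplies the desired $Z\nleT 0$.

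The second step is to invoke Nies's theorem that every K-trivial real is low for ML-randomness, so that $\MLR \subseteq \MLR^Z$, which as remarked above is $Z\le_\LR 0$. This is again found in \cite{nies-book}. Combining the two steps yields the lemma.

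The only real obstacle is the K-trivial $\Rightarrow$ low-for-random implication, which is genuinely deep (the centerpiece of Nies's work on K-triviality) and not something we would reproduce; instead the proof should simply cite \cite{nies-book}, matching the expository style of the rest of \S5.
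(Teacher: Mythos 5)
Your proposal is correct. Your identification of $Z\le_\LR 0$ with lowness for Martin-L\"of randomness is exactly right under the paper's definition ($Z\le_\LR Y$ iff $\MLR^Y\subseteq\MLR^Z$, so $Z\le_\LR 0$ iff $\MLR\subseteq\MLR^Z$), and the two facts you cite -- the existence of a nonrecursive (indeed c.e.) K-trivial real, and Nies's theorem that K-triviality implies lowness for ML-randomness -- do assemble into a proof. The paper itself treats the lemma purely as a citation, pointing to \cite[Theorem 6.1]{aedsh} or to \cite{do-hi-book,nies-book}, i.e., it cites a single result stating precisely the existence of a nonrecursive real that is low for randomness (the classical source being the Ku\v{c}era--Terwijn direct construction of a noncomputable c.e.\ set low for ML-randomness), rather than factoring through K-triviality. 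The difference is only in how the citation is packaged: your route leans on the deep K-trivial $\Rightarrow$ low-for-random implication, whereas the directly cited construction is more elementary and self-contained; on the other hand, your decomposition situates the lemma within the now-standard K-triviality framework, which some readers may find more familiar. Either way the lemma stands, and your choice to cite rather than reprove the hard implication is consistent with the expository style of \S5.
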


\begin{proof}
  See \cite[Theorem 6.1]{aedsh} or \cite{do-hi-book,nies-book}.
\end{proof}

On the other hand, we have:

\begin{lem}\label{lem:low}
  If $Z\le_\LR Y$ then $Z'\leT Z\oplus Y'$.
\end{lem}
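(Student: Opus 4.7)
\emph{Plan.} This lemma is a relativization to $Y$ of the classical Nies theorem ``low for random implies low''; the $Y=0$ case asserts $Z\le_\LR 0\limp Z'\leT 0'$, i.e., every low-for-random real is low. The plan is to follow the treatment in \cite[Chapters 5 and 8]{nies-book} (see also \cite{do-hi-book}), carefully relativized to $Y$ throughout.

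The three key ingredients are: (i) a characterization of $\le_\LR$ in terms of relativized prefix-free Kolmogorov complexity — equivalently, ``$K$-triviality relative to $Y$'' — due to Miller and Yu; (ii) the Kraft--Chaitin machine existence theorem relative to $Y$, which converts bounded-weight $Y$-c.e.\ request sequences into prefix-free $Y$-machines; and (iii) the golden-run / cost-function argument of Nies, which shows that relativized $K$-triviality forces the $Z$-c.e.\ enumeration of $Z'$ to incur only finitely many ``$Y$-expensive'' mind changes.

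Concretely, I would proceed as follows. First, invoke the Miller--Yu characterization to convert the $\LR$-hypothesis into the form that $Z$ is $K$-trivial relative to $Y$. Next, set up a $Y$-recursive cost function $c^Y(x,s)$ measuring the total $Y$-weight that a $Y$-universal prefix-free machine would be forced to commit were the $Z$-c.e.\ approximation of $Z'$ to change its value on input $x$ past stage $s$. The golden-run argument then yields that the total $c^Y$-cost of the entire enumeration is bounded by a $Y$-c.e.\ real, hence by a rational computable from $Y'$. From this cost bound one extracts, for each $n$, a stage $s^*(n)$ such that the $Z$-c.e.\ approximation of $Z'\res n$ does not change past stage $s^*(n)$, with $s^*\leT Y'$. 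Finally, $Z'\leT Z\oplus Y'$: given $n$, compute $s^*(n)$ from $Y'$, run the $Z$-enumeration of $Z'$ with oracle $Z$ up through stage $s^*(n)$, and read off $Z'\res n$.

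The main obstacle is the golden-run / cost-function argument itself, which is the deep combinatorial heart of the theory of $K$-triviality. Since this is a substantial and well-documented body of results, I would cite \cite{nies-book,do-hi-book} rather than reproduce the argument, while verifying that each step relativizes uniformly to an arbitrary oracle $Y$ — which it does, as the relevant constructions of machines, tests, and cost bounds are all oracle-uniform.
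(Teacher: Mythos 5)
The reduction to ``$K$-triviality relative to $Y$'' in step (i) is incorrect, and it is the load-bearing step. The hypothesis $Z\le_\LR Y$ is equivalent, by the Kjos-Hanssen--Miller--Solomon theorem, to $Z\le_{\mathrm{LK}}Y$, i.e., $K^Y(\sigma)\le K^Z(\sigma)+O(1)$ for all strings $\sigma$. It is \emph{not} equivalent to $Z$ being $K$-trivial relative to $Y$, which is the condition $K^Y(Z\res n)\le K^Y(n)+O(1)$ for all $n$. These two conditions coincide exactly when $Y=0$, and that coincidence is itself Nies's deep theorem identifying lowness-for-$K$ with $K$-triviality; nothing forces them to coincide for a general oracle $Y$. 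In fact they provably diverge: relative $K$-triviality forces $Z\leT Y'$ (relativized Chaitin counting), and the relativized golden-run argument you invoke would then yield the stronger conclusion $(Y\oplus Z)'\leT Y'$, hence $Z'\leT Y'$. But $Z\le_\LR Y$ does not in general imply $Z\leT Y'$: by a theorem of Barmpalias, whenever $0'\le_\LR Y$ the $\le_\LR$-lower cone of $Y$ has cardinality $2^{\aleph_0}$, while only countably many reals lie below $Y'$. This is precisely why the lemma keeps the extra $Z$ in the join $Z\oplus Y'$; the golden-run route, if it worked, would overshoot to an unattainable target.

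The proofs in the cited sources use a different and more elementary tool. Kjos-Hanssen's covering characterization says that $Z\le_\LR Y$ holds if and only if every $\Sigma^0_1(Z)$ subclass of $\oiNN$ of measure $<1$ is contained in some $\Sigma^0_1(Y)$ class of measure $<1$. One codes the $Z$-enumeration of $Z'$ (which is $\Sigma^0_1(Z)$) into a $\Sigma^0_1(Z)$ open set $U$ of small measure, so that each element entering $Z'$ contributes a fixed amount of measure in a designated location; covering $U$ by a $\Sigma^0_1(Y)$ open set $V$ of measure $<1$, one uses $Y'$ to find a modulus of convergence for $\mu(V)$, and that modulus together with the oracle $Z$ bounds the settling time of the enumeration of $Z'$. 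This is the argument behind \cite[Theorem~8.8]{aedsh} and the corresponding results in \cite{nies-book,do-hi-book}; no cost-function or golden-run machinery appears there, and none would be correct here.
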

\begin{proof}
  See \cite[Theorem 8.8]{aedsh} or \cite{do-hi-book,nies-book}.
\end{proof}

\begin{lem}[Lutz \protect{\cite{lutz-190809}}]\label{lem:lutz}
  If $Z\le_\LR0$ then $X\oplus Z\le_\LR X$ for all $X\in\MLR$.
\end{lem}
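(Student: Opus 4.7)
The plan is to derive the statement by two applications of van Lambalgen's theorem, using the hypothesis $Z\le_\LR0$ to bridge the unrelativized and $Z$-relativized notions of randomness. Unpacking the definition of $\le_\LR$, what must be shown is $\MLR^X\subseteq\MLR^{X\oplus Z}$, so I would fix an arbitrary $Y\in\MLR^X$ and aim to conclude $Y\in\MLR^{X\oplus Z}$.

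First I would apply unrelativized van Lambalgen to combine the hypotheses $X\in\MLR$ and $Y\in\MLR^X$, obtaining $X\oplus Y\in\MLR$. Next, the hypothesis $Z\le_\LR0$ unfolds to $\MLR\subseteq\MLR^Z$; since the reverse inclusion holds trivially (any computable Martin-L\"of test is a $Z$-computable one), this gives $\MLR=\MLR^Z$, and hence $X\oplus Y\in\MLR^Z$. Finally I would apply van Lambalgen relativized to $Z$, which decomposes $X\oplus Y\in\MLR^Z$ into $X\in\MLR^Z$ and $Y\in\MLR^{X\oplus Z}$; the second of these is exactly the desired conclusion.

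The only delicate point is the precise statement of the relativized form of van Lambalgen's theorem, namely that $A\oplus B\in\MLR^W$ if and only if $A\in\MLR^W$ and $B\in\MLR^{A\oplus W}$, after which the argument is a short chain of implications. No priority argument or explicit Martin-L\"of test construction is needed, and the ingredients can be cited directly from \cite{do-hi-book,nies-book}.
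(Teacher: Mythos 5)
Your proposal is correct and follows essentially the same route as the paper: one application of unrelativized van Lambalgen to get $X\oplus Y\in\MLR$, then the hypothesis $Z\le_\LR 0$ to upgrade this to $\MLR^Z$, then van Lambalgen relativized to $Z$ to extract $Y\in\MLR^{X\oplus Z}$. The paper writes the join as $X_1\oplus X$ rather than $X\oplus Y$, but by symmetry of the join this is the same argument.
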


\begin{proof}
  Assume $Z\le_\LR0$ and $X\in\MLR$.  We must prove that
  $\MLR^X\subseteq\MLR^{X\oplus Z}$.  Given $X_1\in\MLR^X$, it follows
  by Van Lambalgen's Theorem (\cite[Theorem 3.6]{aedsh}, see also
  \cite[Corollary 6.9.3]{do-hi-book}) that $X_1\oplus X\in\MLR$.  But
  then $X_1\oplus X\in\MLR^Z$, so by Van Lambalgen's Theorem relative
  to $Z$ we have $X_1\in\MLR^{X\oplus Z}$, Q.E.D.
\end{proof}

\begin{thm}[Lutz \protect{\cite{lutz-190809}}]\label{thm:lutz}
  \label{pi01 class of random reals counterexample to posner-robinson}
  $\MLR$ does not have the Join Property.
\end{thm}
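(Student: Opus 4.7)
The plan is to exploit Lemma \ref{lem:z} to produce a nonrecursive real $Z$ with $Z\le_\LR 0$, and to exhibit a pair $(A,Z)$ satisfying the hypotheses of the Join Property for which no $B\in\MLR$ can be a witness. Set $A=Z\oplus0'$. Then $A\geT Z\oplus0'$ and $Z\gT0$ hold trivially, so it suffices to rule out the existence of $B\in\MLR$ with $A\eqT B'\eqT B\oplus0'\eqT B\oplus Z$.

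Suppose, toward a contradiction, that such a $B$ exists. Applying Lemma \ref{lem:lutz} with $X=B$ (legitimate since $B\in\MLR$ and $Z\le_\LR 0$), we obtain
\[
B\oplus Z\le_\LR B.
\]
Now feed this into Lemma \ref{lem:low}, taking its ``$Z$'' to be $B\oplus Z$ and its ``$Y$'' to be $B$; this yields
\[
(B\oplus Z)'\leT (B\oplus Z)\oplus B'.
\]
The right-hand side is Turing equivalent to $B'$, because $B\leT B'$ and, by our supposition, $B\oplus Z\eqT B'$. The left-hand side is Turing equivalent to $B''$, because Turing equivalent reals have Turing equivalent jumps and $B\oplus Z\eqT B'$ forces $(B\oplus Z)'\eqT B''$. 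Combining these gives $B''\leT B'$, contradicting the strict Turing inequality $B'\lT B''$.

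There is no real obstacle here beyond arranging the two lemmas in the right order: Lutz's Lemma \ref{lem:lutz} supplies $B\oplus Z\le_\LR B$, and the standard jump-control Lemma \ref{lem:low} then converts this $\LR$-inequality into a Turing-inequality at the jump level. The conceptual content, due to Lutz, is that a low-for-random $Z$ is too feeble to raise the $\LR$-degree of a random $B$, and therefore $B\oplus Z$ cannot simultaneously be random and compute $0'$ on top of $B$.
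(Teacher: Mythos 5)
Your proof is correct and takes essentially the same route as the paper's: use Lemma~\ref{lem:z} to produce a nonrecursive $Z\le_\LR 0$, apply Lemma~\ref{lem:lutz} to get $B\oplus Z\le_\LR B$ for a putative witness $B\in\MLR$, and then use Lemma~\ref{lem:low} to derive a jump-level contradiction. The only cosmetic difference is that you phrase the contradiction as $B''\leT B'$ whereas the paper phrases it as $(X\oplus Z)'\leT X\oplus Z$; these are equivalent under the Join hypothesis $X\oplus Z\eqT X'$.
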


\begin{proof}
  By Lemma \ref{lem:z} let $Z\gT0$ be such that $Z\le_\LR0$.  For any
  $X\in\MLR$ we have $X\oplus Z\le_\LR X$ by Lemma \ref{lem:lutz},
  hence $(X\oplus Z)'\leT X'\oplus Z$ by Lemma \ref{lem:low}.
  If $\MLR$ had the Join Property, there would be an $X\in\MLR$ such
  that $X'\eqT X\oplus Z$, hence $(X\oplus Z)'\leT X\oplus Z$, a
  contradiction.
\end{proof}

\begin{thm}[Lutz \protect{\cite{lutz-190809}}]
  There is a $\Pi^0_1$ class $P\subseteq\oiNN$ of positive measure
  which does not have the Join Property.
\end{thm}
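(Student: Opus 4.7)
The plan is to exhibit a single $\Pi^0_1$ class $P$ of positive measure that lies inside $\MLR$, and then to check that the argument of Theorem \ref{thm:lutz} descends to $P$ verbatim.

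For the first step, I will fix a universal Martin-L\"of test $(U_n)_{n\in\NN}$: uniformly $\Sigma^0_1$ subsets of $\oiNN$ with $\mu(U_n)\le 2^{-n}$ and $\oiNN\setminus\MLR=\bigcap_nU_n$. Taking $P=\oiNN\setminus U_1$ will give a $\Pi^0_1$ class of measure at least $1/2$, and $P\subseteq\MLR$ because any $X\notin U_1$ is a fortiori outside $\bigcap_nU_n$. So $P$ will meet both requirements of the theorem.

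For the second step, I will follow the template of Theorem \ref{thm:lutz}. Using Lemma \ref{lem:z}, fix $Z\gT 0$ with $Z\le_\LR 0$, and take $A=Z\oplus 0'$, which satisfies the Join Property hypotheses $A\geT Z\oplus 0'$ and $Z\gT 0$. If some $B\in P$ were to witness the Join Property for this pair, then in particular $B'\eqT B\oplus Z$. Because $B\in P\subseteq\MLR$, Lemma \ref{lem:lutz} gives $B\oplus Z\le_\LR B$, and then Lemma \ref{lem:low} applied with the $\LR$-reducible set $B\oplus Z$ and oracle $B$ yields $(B\oplus Z)'\leT(B\oplus Z)\oplus B'\eqT B\oplus Z$, contradicting the irreflexivity of the Turing jump. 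Hence no such $B\in P$ exists, and $P$ fails the Join Property.

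I expect no real obstacle beyond the initial observation that a single level of a universal Martin-L\"of test already sits inside $\MLR$ with positive measure; once that is noted, everything else is a direct reuse of Theorem \ref{thm:lutz}, whose argument uses only the fact that the candidate $B$ belongs to $\MLR$.
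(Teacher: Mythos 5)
Your proposal is correct and follows essentially the paper's route: the paper obtains a positive-measure $\Pi^0_1$ subclass of $\MLR$ via Lemma \ref{lem:pos} and then cites Theorem \ref{thm:lutz}, while you simply make both steps explicit, constructing $P=\oiNN\setminus U_1$ from a universal Martin-L\"of test and rerunning the Lutz argument (Lemmas \ref{lem:z}, \ref{lem:lutz}, \ref{lem:low}) for a $B\in P\subseteq\MLR$. The applications of those lemmas are all correct, so no gap remains.
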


\begin{proof}
  This is immediate from Theorem \ref{thm:lutz} and Lemma
  \ref{lem:pos}.
\end{proof}

\section{$\Pi^0_1$ classes constructed by priority arguments}

In this section we construct a special $\Pioi$ class $Q\subseteq\oiNN$
which has neither the Join Property nor the Pseudojump Inversion
Property.  In addition, this $\Pioi$ class $Q$ has some other
interesting features, which we also discuss.

We begin by stating a technical theorem which embodies our
construction of $Q$.  Recall from \S1 that $\la P_e\ra_{e\in\NN}$ is a
fixed enumeration of all $\Pioi$ classes.

\begin{thm}\label{thm:Q}
  There is a nonempty perfect\footnote{A topological space is said to
    be \emph{perfect} if it has no isolated points, i.e., there is no
    open set consisting of exactly one point.  In particular, a set
    $P\subseteq\oiNN$ is perfect if and only if there is no
    $\tau\in\oist$ such that $P\cap\llb\tau\rrb=\{X\}$ for some
    $X\in\oiNN$.} $\Pioi$ class $Q\subseteq\oiNN$ with the following
  properties.  For all $e,n\in\NN$ there exists $m\in\NN$ such that
  for all pairwise distinct $\tau_1,\ldots,\tau_n\in\oist$ with
  $|\tau_1|=\cdots=|\tau_n|\ge m$, the $\Pioi$ class
  \begin{center}
    $(Q\cap\llb\tau_1\rrb)\times\cdots\times(Q\cap\llb\tau_n\rrb)$
  \end{center}
  is either disjoint from $P_e$ or included in $P_e$.  Moreover, this
  $m$ can be computed from $e$ using an oracle for $0'$.
\end{thm}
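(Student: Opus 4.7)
The plan is to construct $Q = [T]$ as a $\Pi^0_1$ class via a priority argument that uses $0'$ as an oracle but yields a recursive tree $T \subseteq \oist$. We enumerate the requirements $R_{e,n}$, one for each $(e,n) \in \NN^2$, and assign checkpoint levels $m_0 < m_1 < \cdots$; at stage $s$ we satisfy $R_{e_s, n_s}$ by arranging that every $n_s$-tuple of pairwise distinct extendible nodes of length $\geq m_s$ yields a product class homogeneous with respect to $P_{e_s}$. To ensure $Q$ is nonempty and perfect, we require every extendible node at checkpoint $s$ to retain at least two extendible descendants at checkpoint $s+1$. Since each stage commits only finitely many levels of $T$, the resulting tree is recursive even though the sequence of checkpoints is $0'$-computable.

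The central operation processes a single requirement $R_{e,n}$. At a candidate level $L > m_{s-1}$ we enumerate the finitely many pairwise distinct $n$-tuples $(\tau_1, \ldots, \tau_n)$ of extendible nodes of length $L$ and use $0'$ to classify the product class $([T] \cap \llb\tau_1\rrb) \times \cdots \times ([T] \cap \llb\tau_n\rrb)$ as disjoint from $P_e$, contained in $P_e$, or mixed. For each mixed tuple, the $\Sigma^0_1$-ness of ``outside $P_e$'' supplies finite extensions $\tau_i \subseteq \sigma_i$ with $\llb\sigma_1\rrb \times \cdots \times \llb\sigma_n\rrb$ disjoint from $P_e$; we thin $T$ below each $\tau_i$ to retain only extensions of $\sigma_i$. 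The key observation is that homogeneity is preserved under shrinking each factor: both ``disjoint from $P_e$'' and ``contained in $P_e$'' continue to hold after replacing factors by subsets. Thus sequential thinnings do not undo earlier commitments, either within the current stage or from earlier-priority stages. To propagate homogeneity from the single level $m_s$ upward to all levels $M \geq m_s$, we enforce at stage $s$ the analogous property for every arity $n' \leq n_s$ simultaneously, and inductively analyze how distinct $n_s$-tuples at level $M+1$ decompose according to their parents at level $M$: tuples with all distinct parents inherit homogeneity directly via shrinking, while tuples with coinciding parents require the arity-$n'$ property at level $M$ for the relevant $n' < n_s$, together with an additional commitment ensuring the $\Pi^0_1$ class below each extendible node at the checkpoint itself possesses the required local homogeneity.

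The main obstacle is meeting all demands simultaneously: perfection, priority consistency, recursiveness of $T$, and $0'$-computability of the witnesses $m$. Perfection is maintained by reserving, below each retained node at each checkpoint, enough structure for the required splits at later checkpoints, using the perfection of the $\Pi^0_1$ class below each such node. Priority consistency follows from the shrinking-preservation principle noted above, placing the construction in the finite-injury framework. The most delicate point, both combinatorially and notationally, is the propagation step for ``all levels $M \geq m_s$'', which requires the construction to be sensitive not only to tuples at a single level but also to the local homogeneity structure of the tree below each surviving node at the checkpoint. Finally, although each $m_s$ is computed with the aid of $0'$, each stage makes only finitely many commitments about $T$, so $T$ itself is recursive, and the map $(e,n) \mapsto m$ promised by the moreover clause is $0'$-computable.
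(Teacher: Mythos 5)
Your high-level strategy---a priority construction producing a thin, perfect $\Pi^0_1$ class by sequentially enforcing homogeneity---matches the paper's, but the organization of requirements differs in a way that creates a genuine gap. The paper does \emph{not} index requirements by pairs $(e,n)$ and then try to enforce homogeneity at all levels $\ge m$ at once. Instead, for each level $l\ge e$, each arity $n\le 2^l$, and each specific tuple $\sigma_1,\ldots,\sigma_n\in\oi^l$ of treemap nodes, there is a separate requirement $R(e,\sigma_1,\ldots,\sigma_n)$, ordered so that requirements at lower levels have higher priority. The construction produces a uniform treemap $T$ as the pointwise limit of a recursive sequence of recursive treemaps $T_s$, with $Q=[T]=\bigcap_s[T_s]$; this is what makes $Q$ a genuine $\Pi^0_1$ class. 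Propagation from treemap levels to arbitrary lengths $M\ge m$ is then essentially free and, crucially, arity-preserving: for $\tau$ with $m_l<|\tau|<m_{l+1}$ and $Q\cap\llb\tau\rrb\ne\emptyset$, there is a \emph{unique} $\sigma'\in\oi^{l+1}$ with $Q\cap\llb\tau\rrb=Q\cap\llb T(\sigma')\rrb$, so distinct $\tau_1,\ldots,\tau_n$ map to distinct $\sigma'_1,\ldots,\sigma'_n$ and the tuple at level $l+1$ inherits the claim directly.

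The delicate point you flag is exactly where your scheme breaks down, and I do not think the fix you sketch works. When a tuple at level $M+1$ has coinciding parents at level $M$, you propose to invoke the arity-$n'$ property for some $n'<n$. But homogeneity with respect to a fixed $P_e$ at arity $n$ and at arity $n'$ are about sets living in different slices of $\oiNN$ under the $n$-fold versus $n'$-fold join, and $P_e$ places no constraint relating the two. Concretely, knowing whether $Q\cap\llb\pi\rrb$ is contained in or disjoint from $P_e$ tells you nothing about whether $(Q\cap\llb\tau_1\rrb)\times(Q\cap\llb\tau_2\rrb)\subseteq P_e$ or is disjoint from it, even when $\tau_1,\tau_2$ are both children of $\pi$. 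Your ``additional commitment ensuring local homogeneity'' gestures at this, but as stated it is not a mechanism. There is a second, smaller issue: you assert that the resulting tree $T$ is recursive even though the construction queries $0'$, on the grounds that each stage commits only finitely much. That inference does not hold as stated---a $0'$-recursive construction produces a $0'$-recursive object. The paper avoids both problems at once by making the construction itself recursive (a recursive sequence of recursive treemaps $T_s$ with finite injury, no oracle), reserving $0'$ purely for the \emph{verification} in the ``moreover'' clause (via the observation that the limit treemap $T_Q$ of a perfect $\Pi^0_1$ class is $0'$-computable).
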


Before proving Theorem \ref{thm:Q}, we spell out some of its
consequences which are of more general interest.  The next theorem
summarizes these features of $Q$.

\begin{thm}\label{thm:Q-props}
  Let $Q\subseteq\oiNN$ be a nonempty $\Pioi$ class as in Theorem
  \ref{thm:Q}.
  \begin{enumerate}
  \item $Q$ is \emph{thin}, i.e., for every $\Pioi$ subclass $P$ of
    $Q$, the complement $Q\setminus P$ is again a $\Pioi$ subclass of
    $Q$.  More generally, for any finite sequence $Q_1,\ldots,Q_n$ of
    pairwise disjoint\footnote{The hypothesis of pairwise disjointness
      is essential.  For instance, if $P=\{X\oplus X\mid X\in Q\}$
      then $(Q\times Q)\setminus P$ is not $\Pioi$, so $Q\times Q$ is
      not thin.} $\Pioi$ subclasses of $Q$, the $\Pioi$ class
    $Q_1\times\cdots\times Q_n$ is thin.
  \item $Q$ is \emph{special}, i.e., no $X\in Q$ is recursive.  More
    generally, the Turing degrees of members of $Q$ are
    \emph{independent}, i.e., no $X\in Q$ is $\leT$ the join of any
    finitely many other members of $Q$.
  \item Every $X\in Q$ is of minimal truth-table degree.
    Consequently, every hyperimmune-free $X\in Q$ is of minimal Turing
    degree.
  \item Every finite sequence $X_1,\ldots,X_n\in Q$ is
    \emph{generalized low}, i.e., $(X_1\oplus\cdots\oplus X_n)'\eqT
    X_1\oplus\cdots\oplus X_n\oplus0'$.
  \end{enumerate}
\end{thm}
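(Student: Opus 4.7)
The plan is to derive each property from the dichotomy of Theorem~\ref{thm:Q} applied to carefully chosen $\Pioi$ classes $P_e$, with the perfectness of $Q$ forcing contradictions whenever uniqueness would be required. I expect part~(3) to be the main technical obstacle, since a truth-table (not merely Turing) reduction must be extracted from the dichotomy.

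For part~(1), given pairwise disjoint $\Pioi$ subclasses $Q_1,\ldots,Q_n\subseteq Q$ and a $\Pioi$ subclass $R=P_e$ of $Q_1\times\cdots\times Q_n$, compactness supplies $m_0$ such that any $X\in Q_i$ and $X'\in Q_j$ with $i\neq j$ disagree on some bit below $m_0$; taking $m\geq m_0$ to be the bound from Theorem~\ref{thm:Q} for $e$ and $n$, the product $Q_1\times\cdots\times Q_n$ is a finite disjoint union, indexed by $n$-tuples of pairwise distinct strings $(\tau_1,\ldots,\tau_n)$ of length $m$ with $Q_i\cap\llb\tau_i\rrb\neq\emptyset$, of the product cylinders $(Q_1\cap\llb\tau_1\rrb)\times\cdots\times(Q_n\cap\llb\tau_n\rrb)$, each of which is either $\subseteq R$ or disjoint from $R$ by the dichotomy; hence both $R$ and its complement in $Q_1\times\cdots\times Q_n$ are finite unions of $\Pioi$ classes. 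For part~(2), suppose distinct $X_0,X_1,\ldots,X_n\in Q$ satisfy $\varphi_i^{X_1\oplus\cdots\oplus X_n}=X_0$ for some index $i$. The class $P=\{Y_0\oplus\cdots\oplus Y_n\in\oiNN:\forall k\,\lnot(\varphi_i^{Y_1\oplus\cdots\oplus Y_n}(k){\downarrow}\neq Y_0(k))\}$ is $\Pioi$ and contains $X_0\oplus\cdots\oplus X_n$, so for $m$ large enough that the $X_j\res m$ are pairwise distinct and the dichotomy of Theorem~\ref{thm:Q} (with parameter $n+1$) applies, the product cylinder about $(X_0,\ldots,X_n)$ lies entirely in $P$. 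Then any $X_0'\in Q\cap\llb X_0\res m\rrb$ satisfies the compatibility condition with the totally defined function $\varphi_i^{X_1\oplus\cdots\oplus X_n}=X_0$, forcing $X_0'=X_0$ and contradicting perfectness; the case $n=0$ (speciality) is analogous, with an oracle-free recursive functional replacing $\varphi_i$.

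For part~(3), let $X\in Q$ and let $\Phi$ be a truth-table functional with $Y=\Phi(X)$ non-recursive; I aim to show $X\lett Y$. I apply Theorem~\ref{thm:Q} to the $\Pioi$ class $R=\{X_1\oplus X_2\in Q\times Q:\Phi(X_1)=\Phi(X_2)\}$ and split cases on whether $\Phi$ is injective on $Q$. In the injective case, the compactness of $Q$ combined with the truth-table use function of $\Phi$ yields a recursive modulus $g$ with $\Phi(X_1)\res g(n)=\Phi(X_2)\res g(n)\limp X_1\res n=X_2\res n$ for all $X_1,X_2\in Q$, the existence of $g(n)$ for each $n$ being witnessed by $\Sioi$ proofs of emptiness of the $\Pioi$ class of counterexample pairs at varying levels $\ell$; this supplies the desired truth-table reduction. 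In the non-injective case, the dichotomy forces $\Phi$ to be constant, with some value $c$, on the nonempty $\Pioi$ class $Q\cap\llb\tau_1\rrb$ for suitable $\tau_1$; and $c$ is recursive, since for each $k\in\NN$ exactly one of the two $\Pioi$ classes $\{X'\in Q\cap\llb\tau_1\rrb:\Phi(X')(k)=j\}$ ($j\in\oi$) is empty, so enumerating $\Sioi$ emptiness witnesses identifies $c(k)$. Iterating the dichotomy on cylinder pairs containing $X$ then shows that either $\Phi$ is constant on some $Q\cap\llb X\res m'\rrb$ (whence $\Phi(X)$ is recursive, contradicting our hypothesis on $Y$) or $\Phi$ is locally injective at $X$ while distinguishing $Q\cap\llb X\res m'\rrb$ from its complement in $Q$ (recovering $X\lett Y$ by the compactness argument localized to that sub-cylinder). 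The uniform bookkeeping that closes both cases, following the classical pattern for thin $\Pioi$ classes, is the main technical hurdle. The second assertion of~(3) then follows because $\lett$ and $\leT$ coincide on hyperimmune-free reals.

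For part~(4), let $Z=X_1\oplus\cdots\oplus X_n$ with $X_1,\ldots,X_n\in Q$ (pairwise distinct by~(2), after pruning duplicates if necessary). To decide $e\in Z'$, equivalently $Z\notin P_e$, from $Z\oplus0'$: first compute $m=m(e,n)$ from $e$ using $0'$ per Theorem~\ref{thm:Q}; next, from $Z$, extract $m^*\geq m$ with $X_1\res m^*,\ldots,X_n\res m^*$ pairwise distinct; finally, using $0'$, decide the $\Sioi$ question whether $(Q\cap\llb X_1\res m^*\rrb)\times\cdots\times(Q\cap\llb X_n\res m^*\rrb)\cap P_e$ is empty. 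By the dichotomy, this emptiness is equivalent to $Z\notin P_e$, i.e., to $e\in Z'$. This yields $Z'\leT Z\oplus0'$; the reverse reduction $Z\oplus0'\leT Z'$ is immediate.
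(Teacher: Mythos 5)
Your parts (1), (2), and (4) are correct and follow essentially the paper's own arguments: in (1) you decompose $Q_1\times\cdots\times Q_n$ into finitely many product cylinders at a level $m$ where the dichotomy of Theorem~\ref{thm:Q} applies (your explicit compactness remark guaranteeing pairwise distinct level-$m$ restrictions is a detail the paper leaves implicit); in (2) you apply the dichotomy to the $\Pioi$ class of join-tuples compatible with the putative reduction and contradict perfectness, exactly as the paper does; and in (4) your procedure (compute $m$ from $0'$, read off distinct restrictions from the oracle, decide the $\Sigma^0_1$ emptiness question with $0'$) is the paper's proof verbatim.

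The genuine gap is in part (3), and you flag it yourself as the ``main technical hurdle.'' Your case split on \emph{global} injectivity of $\Phi$ on $Q$ is not the dichotomy the construction supplies, and the non-injective case does not reduce to the alternative you assert. The paper instead splits along $X$: with $P_e=\{X_0\oplus X_1\in Q\times Q\mid\Phi(X_0)=\Phi(X_1)\}$ and $m$ from Theorem~\ref{thm:Q}, call $\tau$ splitting if $(Q\cap\llb\tau\cat\la0\ra\rrb)\times(Q\cap\llb\tau\cat\la1\ra\rrb)$ is disjoint from $P_e$. If arbitrarily long $\tau\subset X$ are non-splitting, then $\Phi$ is constant on the sibling cylinder (indeed on $Q\cap\llb\tau\rrb$) with value $\Phi(X)$, which is recursive by your emptiness search, a contradiction; otherwise every sufficiently long $\tau\subset X$ is splitting, and this yields only that $\Phi(Y)\ne\Phi(X)$ for every $Y\ne X$ in $Q\cap\llb\tau_0\rrb$, i.e.\ injectivity \emph{at} $X$. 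It does not yield injectivity of $\Phi$ on any cylinder $Q\cap\llb X\res m'\rrb$, because non-splitting nodes may occur off the path of $X$ inside every such cylinder; so the second horn of your claimed iteration (``locally injective at $X$ while distinguishing the cylinder from its complement'') is not forced by the dichotomy. This matters because your modulus-of-injectivity argument genuinely needs injectivity on the whole compact cylinder: only then is each counterexample class $\{Y_1\oplus Y_2 : Y_1\res n\ne Y_2\res n,\ \Phi(Y_1)\res\ell=\Phi(Y_2)\res\ell\}$ empty for some $\ell$, making the modulus $g$ recursive and the resulting functional total on all oracles, hence a tt-reduction. With injectivity only at $X$, collisions away from $X$ can keep those classes nonempty, no recursive modulus is available, and the natural search is neither total nor of recursively bounded use, giving at best a Turing reduction. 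So the bookkeeping you defer is precisely the unproved content of part (3); the workable route is the paper's splitting/non-splitting dichotomy along $X$, with the tt-extraction in the all-splitting case requiring an argument beyond the one your injective-case computation supplies as written.
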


\begin{proof}
  We prove parts 1 through 4 in that order.

  1. To see that $Q$ is thin, let $P$ be a $\Pi^0_1$ subclass of $Q$.
  Fix $e\in\NN$ such that $P=P_e$, and let $m$ be as in Theorem
  \ref{thm:Q}.  Then for each $\tau\in\oi^m$ the $\Pioi$ class
  $Q\cap\llb\tau\rrb$ is either disjoint from $P$ or included in $P$.
  Hence $Q\setminus P=\bigcup_\tau(Q\cap\llb\tau\rrb)$ where the union
  is taken over all $\tau\in\oi^m$ such that $Q\cap\llb\tau\rrb$ is
  disjoint from $P$.  Thus $Q\setminus P$ is is a union of finitely
  many $\Pioi$ classes, so it too is a $\Pioi$ class.

  For the generalization, let $P$ be a $\Pioi$ subclass of
  $Q_1\times\cdots\times Q_n$ where $Q_1,\ldots,Q_n$ are pairwise
  disjoint $\Pioi$ subclasses of $Q$.  Fix $e\in\NN$ such that $P=P_e$,
  and let $m$ be as in Theorem \ref{thm:Q}.  We then have
  \begin{center}
    $(Q_1\times\cdots\times Q_n)\setminus P=
    \bigcup_{\tau_1,\ldots,\tau_n}((Q_1\cap\llb\tau_1\rrb)
    \times\cdots\times(Q_n\cap\llb\tau_n\rrb))$
  \end{center}
  where the union is taken over all pairwise distinct sequences
  $\tau_1,\ldots,\tau_n\in\oi^m$ such that
  $(Q_1\cap\llb\tau_1\rrb)\times\cdots\times(Q_n\cap\llb\tau_n\rrb)$
  is disjoint from $P$.  Thus $(Q_1\times\cdots\times Q_n)\setminus P$
  is a union of finitely many $\Pioi$ classes, so it too is a $\Pioi$
  class, Q.E.D.

  2. Assume for a contradiction that $X\in Q$ is recursive.  Fix
  $e\in\NN$ such that $P_e=\{X\}$, let $m\in\NN$ be as in Theorem
  \ref{thm:Q}, and let $\tau=X\res m$.  Clearly $X\in
  Q\cap\llb\tau\rrb$, hence $Q\cap\llb\tau\rrb$ is not disjoint from
  $P_e$, hence $Q\cap\llb\tau\rrb$ is included in $P_e$, hence $X$ is
  the unique member of $Q\cap\llb\tau\rrb$.  Thus $X$ is an isolated
  point of $Q$, contradicting the fact that $Q$ is perfect.

  For the generalization, it will suffice to show that $X\nleT
  X_1\oplus\cdots\oplus X_n$ for all pairwise distinct
  $X,X_1,\ldots,X_n\in Q$.  Let $\Psi:{\subseteq}\oiNN\to\oiNN$ be a
  partial recursive functional, and assume for a contradiction that
  $X=\Psi(X_1\oplus\cdots\oplus X_n)$.  Let $e\in\NN$ be such that
  \begin{center}
    $P_e=\{Y\oplus Y_1\oplus\cdots\oplus Y_n\mid\forall i\,\forall
    j\,($if $\Psi(Y_1\oplus\cdots\oplus Y_n)(i){\downarrow}=j$ then
    $Y(i)=j)\}$.
  \end{center}
  Let $m$ be as in Theorem \ref{thm:Q} and sufficiently large so that
  $\tau=X\res m$, $\tau_1=X_1\res m$, \ldots, $\tau_n=X_n\res m$ are
  pairwise distinct.  The $\Pioi$ class
  \begin{center}
    $(Q\cap\llb\tau\rrb)\times(Q\cap\llb\tau_1\rrb)\times
    \cdots\times(Q\cap\llb\tau_n\rrb)$
  \end{center}
  contains $X\oplus X_1\oplus\cdots\oplus X_n$ and is therefore not
  disjoint from $P_e$, so it is included in $P_e$.  In particular, for
  all $Y\in Q\cap\llb\tau\rrb$ and all $i\in\NN$ we have
  $\Psi(X_1\oplus\cdots\oplus X_n)(i){\downarrow}=X(i)=Y(i)$, hence
  $X=Y$.  Thus $Q\cap\llb\tau\rrb=\{X\}$, so again $X$ is an isolated
  point of $Q$, contradicting the fact that $Q$ is perfect.

  3. Let $X\in Q$ be given.  We have already seen that $X$ is not
  recursive.  To prove that $X$ is of miminal truth-table degree, it
  remains to show that $X\lett\Psi(X)$ for any truth-table functional
  $\Psi$ such that $\Psi(X)$ is not recursive.  Given such a
  functional $\Psi$, let $e$ be such that
  \begin{center}
    $P_e=\{X_0\oplus X_1\in Q\times Q\mid\Psi(X_0)=\Psi(X_1)\}$.
  \end{center}
  By Theorem \ref{thm:Q} let $m$ be such that for all $\tau\in\oi^{\ge
    m}$ the $\Pioi$ class $(Q\cap\llb\tau\cat\la0\ra\rrb)\times
  (Q\cap\llb\tau\cat\la1\ra\rrb)$ is either disjoint from $P_e$ or
  included in $P_e$.  If it is disjoint from $P_e$, let us say that
  $\tau$ is \emph{splitting}.

  Case 1: For all sufficiently large $\tau\subset X$, $\tau$ is
  splitting.  Then for all sufficiently large $\tau\subset X$ and all
  $X_0\in Q\cap\llb\tau\cat\la0\ra\rrb$ and $X_1\in
  Q\cap\llb\tau\cat\la1\ra\rrb$, we have $\Psi(X_0)\ne\Psi(X_1)$.  In
  particular we have $\Psi(X)\ne\Psi(Y)$ for all $Y\in
  Q\cap\llb\tau\rrb$ such that $X\ne Y$.  From this it follows that
  $X$ is truth-table reducible to $\Psi(X)$.

  Case 2: There are arbitrarily large $\tau\subset X$ such that $\tau$
  is not splitting.  Let $\tau\subset X$ be non-splitting with
  $|\tau|\ge m$.  Then $Q\cap\llb\tau\cat\la0\ra\rrb$ and
  $Q\cap\llb\tau\cat\la1\ra\rrb$ are nonempty, and for all $X_0\in
  Q\cap\llb\tau\cat\la0\ra\rrb$ and $X_1\in
  Q\cap\llb\tau\cat\la1\ra\rrb$ we have $\Psi(X_0)=\Psi(X_1)$.  In
  particular, letting $i,j\in\oi$ be such that $X\in
  Q\cap\llb\tau\cat\la i\ra\rrb$ and $i+j=1$, we have
  $\Psi(X)=\Psi(Y)$ for all $Y\in Q\cap\llb\tau\cat\la j\ra\rrb$. From
  this it follows that $\Psi(X)$ is recursive, Q.E.D.

  A general property of hyperimmune-free reals $X$ is that the Turing
  degrees $\le\degT(X)$ are the same as the truth-table degrees
  $\le\degtt(X)$.  Since every $X\in Q$ is of minimal truth-table
  degree, it follows that every hyperimmune-free $X\in Q$ is of
  minimal Turing degree.

  4. Up until now we have not used the ``moreover'' clause of Theorem
  \ref{thm:Q}, but now we shall use it.  We may safely assume that
  $X_1,\ldots,X_n\in Q$ are pairwise distinct.  Recall that we have
  defined the Turing jump $X'\in\oiNN$ of $X\in\oiNN$ to be (the
  characteristic function of) the set $\{e\in\NN\mid X\notin P_e\}$.
  To compute $(X_1\oplus\cdots\oplus X_n)'$ we proceed as follows.
  First we use our oracle for $X_1\oplus\cdots\oplus X_n$ to find
  $l\in\NN$ such that $X_1\res l$, \ldots, $X_n\res l$ are pairwise
  distinct.  Then, given $e\in\NN$, we use our oracle for $0'$ to
  compute $m\ge l$ as in Theorem \ref{thm:Q}.  Letting $\tau_i=X_i\res
  m$ for $i=1,\ldots,n$, we know that the $\Pioi$ class
  $(Q\cap\llb\tau_1\rrb)\times\cdots\times(Q\cap\llb\tau_n\rrb)$ is
  either disjoint from $P_e$ or included in $P_e$.  Using our oracle
  for $0'$ again, we can decide whether this $\Pioi$ class is disjoint
  from $P_e$ or not.  The answer to this question tells us whether
  $e\in(X_1\oplus\cdots\oplus X_n)'$ or not.  Thus
  $(X_1\oplus\cdots\oplus X_n)'$ is computable from
  $X_1\oplus\cdots\oplus X_n\oplus0'$, Q.E.D.
\end{proof}

\begin{rem}
  A plausible generalization of part 3 of Theorem \ref{thm:Q-props}
  would say that for every pairwise distinct finite sequence
  $X_1,\ldots,X_n\in Q$, the tt-degrees
  $\le\degtt(X_1\oplus\cdots\oplus X_n)$ should form a lattice
  isomorphic to the powerset of $\{1,\ldots,n\}$.  From this it would
  follow that the same holds for Turing degrees, provided
  $X_1\oplus\cdots\oplus X_n$ is hyperimmune-free.
\end{rem}

\begin{thm}\label{thm:Q-nonprops}
  The special $\Pioi$ class $Q\subseteq\oiNN$ of Theorem \ref{thm:Q}
  has neither the Join Property nor the Pseudojump Inversion Property.
\end{thm}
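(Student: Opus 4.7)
The strategy is to leverage part~4 of Theorem~\ref{thm:Q-props}, the generalized lowness of finite joins of elements of $Q$: $(X_1\oplus\cdots\oplus X_n)'\eqT X_1\oplus\cdots\oplus X_n\oplus0'$. Both failures are obtained by taking $A=0'$.

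The Pseudojump Inversion failure is the easier of the two. Let $e$ be any index of the nowhere-defined partial recursive function, so that $W_e^X=\emptyset$ and hence $J_e(X)\eqT X$ for every $X$. A witness $B\in Q$ for Pseudojump Inversion would then satisfy $0'\eqT J_e(B)\eqT B$, forcing $B\eqT0'$; but then Turing invariance of the jump gives $B'\eqT0''$, while generalized lowness applied to $B$ alone gives $B'\eqT B\oplus0'\eqT0'$, contradicting $0''\gT0'$.

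The Join Property failure requires more care, since the low Turing degrees do not form an ideal in general (witness Sacks splitting), so it will not suffice to take $Z$ to be an arbitrary nonrecursive low real. Instead, invoke the Low Basis Theorem on the nonempty $\Pioi$ class $Q$ to produce $Z\in Q$ with $Z'\leT0'$; this $Z$ is nonrecursive by part~2 of Theorem~\ref{thm:Q-props}, so $Z\gT0$, and $Z\leT0'$ gives $A=0'\geT Z\oplus0'$. A witness $B\in Q$ for the Join Property would satisfy $B\oplus0'\eqT0'$, hence $B\leT0'$, hence $B$ would be low by generalized lowness. Applying part~4 of Theorem~\ref{thm:Q-props} to the pair $B,Z\in Q$, both of which are $\leT0'$, yields $(B\oplus Z)'\eqT B\oplus Z\oplus0'\eqT0'$, so $B\oplus Z$ is itself low. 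But the required equivalence $B\oplus Z\eqT0'$ would give $(B\oplus Z)'\eqT0''$, contradicting the preceding line. The chief obstacle is recognizing this pitfall: without choosing $Z$ inside $Q$ one cannot conclude $B\oplus Z$ is low from $B,Z$ both being low, and the argument stalls.
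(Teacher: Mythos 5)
Your proof is correct, and like the paper's it hinges entirely on part~4 of Theorem~\ref{thm:Q-props}, but your two sub-arguments diverge from the paper's in interesting ways. For the Pseudojump Inversion failure you use the \emph{trivial} pseudojump operator $J_e(X)=X\oplus\emptyset\eqT X$ and take $A=0'$, reducing the whole question to whether $Q$ can contain an element of degree $\mathbf{0}'$, which part~4 immediately rules out. The paper instead invokes a nontrivial classical fact (existence of an $e$ with $X\lT J_e(X)$ and $(J_e(X))'\eqT X'$ for all $X$, obtained by relativizing a low nonrecursive r.e.\ set) and argues $0'\nleT J_e(B)$ for all $B\in Q$. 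Your route is more elementary and self-contained, since it needs no citation beyond Theorem~\ref{thm:Q-props} itself; this is a genuine simplification worth noting. For the Join Property failure, however, you over-engineer: because you fix $A=0'$ in advance, you need $Z\leT 0'$, which forces you to invoke the Low Basis Theorem to get a low $Z\in Q$. The paper avoids this entirely by taking an \emph{arbitrary} $Z\in Q$ and $A=Z\oplus 0'$; part~4 applied to the pair $B,Z\in Q$ gives $(B\oplus Z)'\eqT B\oplus Z\oplus 0'$, hence $0'\nleT B\oplus Z$, and since any admissible $A$ satisfies $A\geT 0'$, no $B\in Q$ can have $A\eqT B\oplus Z$. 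Your argument is still correct---the contradiction $0''\eqT 0'$ falls out the same way---but the Low Basis Theorem is pure overhead. You did, though, correctly identify the pitfall that $Z$ must lie \emph{inside} $Q$ for part~4 to apply; taking $Z$ to be a low real outside $Q$ would stall exactly as you describe, since low degrees do not form an ideal.
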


\begin{proof}
  We rely mainly on part 4 of Theorem \ref{thm:Q-props}.  To see that
  the Join Property fails for $Q$, fix $Z\in Q$.  Then $Z\gT0$ but for
  all $B\in Q$ we have $(B\oplus Z)'\eqT B\oplus Z\oplus0'$, hence
  $0'\nleT B\oplus Z$.  To see that Pseudojump Inversion fails for
  $Q$, consider a pseudojump operator $J_e$ with the
  property\footnote{The existence of such pseudojump operators is well
    known \cite[\S VII.1]{soare}.  One way to obtain such an operator
    is to combine part 4 of Theorem \ref{thm:Q-props} with the R.\ E.\
    Basis Theorem \cite[Theorem 3]{js2} to get an $e\in\NN$ such that
    $0\lT W_e$ and $(W_e)'\eqT0'$.  The desired operator $J_e$ is then
    obtained by uniform relativization to an arbitrary Turing oracle
    $X$.} that $X\lT J_e(X)$ and $(J_e(X))'\eqT X'$ for all
  $X\in\NNNN$.  Then for all $B\in Q$ we have $(J_e(B))'\eqT B'\eqT
  B\oplus0'$, hence $0'\nleT J_e(B)$.
\end{proof}

The rest of this section is devoted to the proof of Theorem
\ref{thm:Q}.  We use a priority construction in the vein of
Martin/Pour-El \cite{mpe} and Jockusch/Soare \cite[Theorem 4.7]{js}.

Our construction will be presented in terms of treemaps.  A
\emph{treemap} is a mapping $T:\oist\to\oist$ such that
$T(\sigma\cat\la i\ra)\supseteq T(\sigma)\cat\la i\ra$ for all
$\sigma\in\oist$ and all $i\in\oi$.  Note that for any treemap $T$ and
$\sigma,\rho\in\oist$ we have $\sigma\subset\rho$ if and only if
$T(\sigma)\subset T(\rho)$.  Note also that there is a one-to-one
correspondence between treemaps $T$ and nonempty perfect closed
subsets of $\oiNN$, given by $T\mapsto[T]=\{T(X)\mid X\in\oiNN\}$
where $T(X)=\bigcup_{n\in\NN}T(X\res n)$.

Recall that any $\Pioi$ class $P\subseteq\oiNN$ is closed.  Therefore,
if $P$ is also nonempty and perfect, there is a unique treemap $T_P$
such that $P=[T_P]$.

\begin{lem}\label{lem:treemap}
  Let $P\subseteq\oiNN$ be a nonempty perfect $\Pioi$ class.  Then,
  the treemap $T=T_P$ corresponding to $P$ is computable using $0'$ as
  an oracle.
\end{lem}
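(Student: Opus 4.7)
The plan is to give an explicit $0'$-computable procedure that, given an index of $P$ as a $\Pioi$ class, outputs $T_P(\sigma)$ on input $\sigma\in\oist$. The key observation I would use is that, by compactness of $\oiNN$, the predicate ``$P\cap\llb\sigma\rrb=\emptyset$'' is $\Sigma^0_1$ uniformly in $\sigma$ and in an index of $P$; in particular $0'$ decides it.

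I would define $T_P$ by recursion, following the treemap correspondence noted in the paper. Call $\eta\in\oist$ a \emph{splitting node} (for $P$) if both $P\cap\llb\eta\cat\la0\ra\rrb$ and $P\cap\llb\eta\cat\la1\ra\rrb$ are nonempty. Set $T_P(\la\ra)$ to be the shortest splitting node, and given $T_P(\sigma)$, set $T_P(\sigma\cat\la i\ra)$ to be the shortest splitting node $\eta\supseteq T_P(\sigma)\cat\la i\ra$. The defining clause $T_P(\sigma\cat\la i\ra)\supseteq T_P(\sigma)\cat\la i\ra$ is then automatic.

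The content lies in locating the shortest splitting node $\eta$ above a given $\tau$ with $P\cap\llb\tau\rrb\ne\emptyset$. I would use the obvious search: start with $\eta:=\tau$; at each step, use $0'$ to test whether $\eta$ is a splitting node, halting if so; otherwise use $0'$ again to determine the unique $i\in\oi$ with $P\cap\llb\eta\cat\la i\ra\rrb\ne\emptyset$, and replace $\eta$ by $\eta\cat\la i\ra$. The main thing to verify — and the only real step of the proof — is termination. If the loop never halted, the resulting infinite branch $X=\bigcup_n\eta_n$ would be the unique element of $P\cap\llb\tau\rrb$ (it lies in $P$ by closedness, and by construction no other point of $P$ extends any $\eta_n$), making $X$ an isolated point of $P$ and contradicting perfectness. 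Hence the search halts at a finite stage with the desired $\eta$.

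Finally, one should verify $[T]=P$ for the treemap $T$ just defined, so that by the uniqueness noted in the paper $T=T_P$. The inclusion $[T]\subseteq P$ follows from closedness of $P$, since each $T(\sigma)$ extends to a point of $P$. For the reverse inclusion, given $X\in P$ one uses perfectness inductively to locate splitting nodes of $P$ below $X$ of arbitrary length; these are exactly the values $T(X_\ast\res n)$ for a suitable $X_\ast\in\oiNN$, giving $X\in[T]$. The only nontrivial step in the entire argument is the termination-by-perfectness observation; everything else is routine bookkeeping.
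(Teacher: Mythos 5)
Your proposal is correct and takes essentially the same approach as the paper: both compute $T_P(\sigma\cat\la i\ra)$ as the least splitting node extending $T_P(\sigma)\cat\la i\ra$, using the $0'$-oracle to decide whether $P\cap\llb\tau\rrb$ is empty. You simply spell out the details the paper leaves implicit, namely termination of the search via perfectness and the verification that the resulting treemap is indeed $T_P$.
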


\begin{proof}
  Given $\tau\in\oist$ we can use our oracle for $0'$ to decide
  whether $P\cap\llb\tau\rrb$ is empty or not.  Thus, if already know
  $T(\sigma)$ for some $\sigma\in\oist$, we can then compute
  $T(\sigma\cat\la i\ra)$ for $i\in\oi$, because $T(\sigma\cat\la
  i\ra)$ is the smallest $\tau\supseteq T(\sigma)\cat\la i\ra$ such
  that $P\cap\llb\tau\cat\la j\ra\rrb$ is nonempty for all $j\in\oi$.
  Thus $T\leT0'$, Q.E.D.
\end{proof}

In our proof of Theorem \ref{thm:Q}, the treemap $T=T_Q$ corresponding
to $Q$ will be \emph{uniform}, in the sense that $|T(\sigma)|$ will
depend only on $|\sigma|$.  This feature of $T$ will be for
convenience only, but $T$ will also have another key property, which
reads as follows.  For all $e,l,n\in\NN$ with $e\le l$ and $n\le2^l$
and all pairwise distinct $\sigma_1,\ldots,\sigma_n\in\oi^l$, either
$\llb T(\sigma_1)\rrb\times\cdots\times\llb T(\sigma_n)\rrb\cap
P_e=\emptyset$ or $(Q\cap\llb
T(\sigma_1)\rrb)\times\cdots\times(Q\cap\llb T(\sigma_n)\rrb)\subseteq
P_e$.  Thus by Lemma \ref{lem:treemap} the ``moreover'' clause of
Theorem \ref{thm:Q} will hold with $m=m_e=|T(\sigma)|$ for all
$\sigma\in\oi^e$.

\begin{proof}[Proof of Theorem \ref{thm:Q}]
  We shall construct $T=T_Q$ as the limit $T=\lim_sT_s$ of a recursive
  sequence of recursive uniform treemaps $T_s$, $s\in\NN$.  This will
  be a pointwise limit as $s\to\infty$, in the sense that for each
  $\sigma\in\oist$ we shall have $T(\sigma)=T_s(\sigma)$ for all
  sufficiently large $s$.  Also, the treemaps $T_s$ will be
  \emph{nested}, in the sense that for all $s\in\NN$ and all
  $\sigma\in\oist$ there will be a $\rho\in\oist$ such that
  $T_{s+1}(\sigma)=T_s(\rho)$.  From this it follows that
  $[T_{s+1}]\subseteq[T_s]$ for all $s$, and that
  $[T]=\bigcap_s[T_s]$.  Thus $Q=[T]$ will be a nonempty perfect
  $\Pi^0_1$ subclass of $\oiNN$.

  In presenting our construction of $T=\lim_sT_s$, we shall use the
  notation
  \begin{center}
    $P_{e,s}=\{X\in\oiNN\mid\varphi_{e,s}^{(1),X\res s}(0)\uparrow\}$.
  \end{center}
  Note that $P_{e,s}=\bigcup_\tau\llb\tau\rrb$ where the union is
  taken over a finite subset of $\oi^s$.  Note also that
  $P_{e,s+1}\subseteq P_{e,s}$ for all $s\in\NN$, and that
  \begin{center}
    $P_e=\{X\in\oiNN\mid\varphi_e^{(1),X}(0)\uparrow\}=\bigcap_sP_{e,s}$.
  \end{center}

  We now offer a preliminary account of the construction and proof.
  To each $e,l,n\in\NN$ with $e\le l$ and $n\le2^l$ and each pairwise
  distinct sequence $\sigma_1,\ldots,\sigma_n\in\oi^l$, we associate a
  \emph{requirement $R(e,\sigma_1,\ldots,\sigma_n)$ at level $l$}.
  Intuitively, the purpose of this requirement is to insure that
  $(\llb T(\sigma_1)\rrb\times\cdots\times\llb T(\sigma_n)\rrb)\cap
  P_e=\emptyset$ ``if possible.''  The strategy here will be to
  attempt to arrange that $(\llb
  T_s(\sigma_1)\rrb\times\cdots\times\llb T_s(\sigma_n)\rrb)\cap
  P_{e,s}=\emptyset$ for all sufficiently large $s$, ``if possible.''
  We shall argue that if this attempt fails, then $(Q\cap\llb
  T_s(\sigma_1)\rrb)\times\cdots\times(Q\cap\llb
  T_s(\sigma_n)\rrb)\subseteq P_{e,s}$ for all sufficiently large $s$,
  hence $(Q\cap\llb T(\sigma_1)\rrb)\times\cdots\times(Q\cap\llb
  T(\sigma_n)\rrb)\subseteq P_e$.

  We now give the detailed construction of $T_s$ for all $s\in\NN$.
  Begin by fixing a recursive linear ordering of all of the
  requirements, called the \emph{priority ordering}.  Arrange this
  ordering so that for each $l\in\NN$, all requirements at level $l$
  are of lower priority than all requirements at level $<l$.  The idea
  here is that requirements at level $l$ can be ``injured'' by
  requirements at level $<l$ but will never be ``injured'' by
  requirements at level $\ge l$.  Note that for each $l$ there are
  only finitely many requirements at level $\le l$.  We proceed by
  induction on $s$.

  Stage 0.  Let $T_0(\nu)=\nu$ for all $\nu\in\oist$.  Clearly $T_0$
  is a uniform treemap.

  Stage $s+1$.  Assume inductively that we have defined a uniform
  treemap $T_s$.  Our task at this stage is to define $T_{s+1}$.  A
  requirement $R(e,\sigma_1,\ldots,\sigma_n)$ at level $l$ is said to
  be \emph{requesting attention at stage $s$} if $(\llb
  T_s(\sigma_1)\rrb\times\cdots\times\llb T_s(\sigma_n)\rrb)\cap
  P_{e,s}\ne\emptyset$ but there exist $k\le s$ and
  $\rho_1,\ldots,\rho_n\in\oi^k$ such that $\rho_i\supset\sigma_i$ for
  all $i=1,\ldots,n$ and $(\llb T_s(\rho_1)\rrb\times\cdots\times\llb
  T_s(\rho_n)\rrb)\cap P_{e,s}=\emptyset$.  If no requirements are
  requesting attention, do nothing, i.e., let $T_{s+1}(\nu)=T_s(\nu)$
  for all $\nu\in\oist$.  Otherwise, let $R_s$ be the requirement of
  highest priority which is requesting attention.  For this
  requirement only, choose $k>l\ge e$ and $\rho_1,\ldots,\rho_n$ as
  above and define $T_{s+1}$ as follows.  For each $\nu\in\oi^{<l}$
  let $T_{s+1}(\nu)=T_s(\nu)$.  For each $i=1,\ldots,n$ let
  $T_{s+1}(\sigma_i)=T_s(\rho_i)$.  For each $\sigma\in\oi^l$ other
  than $\sigma_1,\ldots,\sigma_n$, let
  $T_{s+1}(\sigma)=T_s(\sigma\cat\la\underbrace{0,\ldots,0}_{k-l}\ra)$.
  For each $\sigma\in\oi^l$ and each $\nu\in\oist$, let
  $T_{s+1}(\sigma\cat\nu)=T_s(\rho\cat\nu)$ where $\rho$ is such that
  $T_{s+1}(\sigma)=T_s(\rho)$.  Clearly $T_{s+1}$ is a treemap, and it
  is uniform because $T_s$ is uniform and for each $\sigma\in\oi^l$ we
  have $T_{s+1}(\sigma)=T_s(\rho)$ for some $\rho\in\oi^k$.

  We now have a recursive nested sequence of uniform treemaps $T_s$.
  As $s$ goes to infinity, consider the history of a single
  requirement $R=R(e,\sigma_1,\ldots,\sigma_n)$ at level $l$.  Let us
  say that $R$ is \emph{satisfied at stage $s$} if $(\llb
  T_s(\sigma_1)\rrb\times\cdots\times\llb T_s(\sigma_n)\rrb)\cap
  P_{e,s}=\emptyset$, otherwise \emph{unsatisfied at stage $s$}.  By
  construction, if $R=R_s$ then $R$ is unsatisfied at stage $s$ but
  becomes satisfied at stage $s+1$.  And if $R$ is satisfied at stage
  $s$, it remains satisfied at stage $s+1$ unless $R_s$ is at level
  $<l$.  Let us say that $R$ is \emph{injured at stage $s$} if the
  latter case holds, i.e., if the level of $R_s$ is less than the
  level of $R$.  By induction along the priority ordering, we now see
  that there are only finitely many stages $s$ at which $R$ is
  injured, and only finitely many stages $s$ at which $R$ requests
  attention.  This holds for all of the finitely many requirements at
  level $\le l$, so for all $\sigma\in\oi^l$ and all sufficiently
  large $s$ we have $T_{s+1}(\sigma)=T_s(\sigma)$.  We now have a
  uniform treemap $T$ defined by $T(\sigma)=\lim_sT_s(\sigma)$, so
  letting $Q=[T]=\bigcap_s[T_s]$ we have a nonempty perfect $\Pioi$
  class $Q\subseteq\oiNN$.

  We claim that for each requirement $R$ at level $l$ as above, either
  $(\llb T(\sigma_1)\rrb\times\cdots\times\llb T(\sigma_n)\rrb)\cap
  P_e=\emptyset$ or $(Q\cap\llb
  T(\sigma_1)\rrb)\times\cdots\times(Q\cap\llb
  T(\sigma_n)\rrb)\subseteq P_e$.  Suppose not.  Then $(\llb
  T(\sigma_1)\rrb\times\cdots\times\llb T(\sigma_n)\rrb)\cap
  P_e\ne\emptyset$ and
  \begin{center}
    $((Q\cap\llb T(\sigma_1)\rrb)\times\cdots\times(Q\cap\llb
    T(\sigma_n)\rrb))\setminus P_e\ne\emptyset$.
  \end{center}
  For $i=1,\ldots,n$ fix $X_i\in Q\cap\llb T(\sigma_i)\rrb$ so that
  $X_1\oplus\cdots\oplus X_n\notin P_e$.  Let $m$ be so large that
  $(\llb X_1\res m\rrb\times\cdots\times\llb X_n\res m\rrb)\cap
  P_e=\emptyset$.  Let $k>l$ and $\rho_1,\ldots,\rho_n\in\oi^k$ be
  such that $T(\rho_i)\subset X_i$ and $|T(\rho_i)|\ge m$ for all
  $i=1,\ldots,n$.  Thus we have $\rho_i\supset\sigma_i$ for all
  $i=1,\ldots,n$, and $(\llb T(\rho_1)\rrb\times\cdots\times\llb
  T(\rho_n)\rrb)\cap P_e=\emptyset$.  It follows by compactness that
  for all sufficiently large $s$ we have $(\llb
  T(\rho_1)\rrb\times\cdots\times\llb T(\rho_n)\rrb)\cap
  P_{e,s}=\emptyset$.  And we can also let $s$ be so large that $e\le
  l<k\le s$ and $T_s(\sigma_i)=T(\sigma_i)$ and
  $T_s(\rho_i)=T(\rho_i)$ for all $i=1,\ldots,n$.  So now we see that
  $(\llb T_s(\sigma_1)\rrb\times\cdots\times\llb
  T_s(\sigma_n)\rrb)\cap P_{e,s}\ne\emptyset$ and $(\llb
  T_s(\rho_1)\rrb\times\cdots\times\llb T_s(\rho_n)\rrb)\cap
  P_{e,s}=\emptyset$ for all suffiently large $s$.  Thus our
  requirement $R$ is requesting attention at all sufficiently large
  stages $s$.  This contradiction proves our claim.

  The above claim gives us the principal conclusion of Theorem
  \ref{thm:Q}.  And then, as we have seen, Lemma \ref{lem:treemap}
  gives us the ``moreover'' clause with $m=m_e$.  The proof of Theorem
  \ref{thm:Q} is now complete.
\end{proof}


\end{document}